\DeclareFontFamily{U}{ntxmia}{\skewchar \font =127}
 \DeclareFontShape{U}{ntxmia}{m}{it}{
                        <-> \ntxmath@scaled ntxmia
                      }{}    
                      \DeclareFontShape{U}{ntxmia}{b}{it}{
                        <-> \ntxmath@scaled ntxbmia
                      }{}
\def\NAT@spacechar{~}
\crefname{figure}{figure}{figures}
\crefname{claim}{Claim}{Claims}
\crefname{figure}{Figure}{Figures}
\crefname{claim}{claim}{claims}
\Crefname{figure}{Figure}{Figures}
\Crefname{claim}{Claim}{Claims}
\newtheoremstyle{primetheorem}
{\topsep}                
{}                
{\slshape}        
{}                
{\bfseries}       
{'.}               
{5pt plus 1pt minus 1pt}               
{}                
\theoremstyle{definition}
\newtheorem{definition}{Definition}
\newtheorem{remark}[definition]{Remark}
\theoremstyle{plain}
\newtheorem{claim}{Claim}
\newtheorem{theorem}[definition]{Theorem}
\newtheorem{lemma}[definition]{Lemma}
\newtheorem{problem}[definition]{Problem}
\theoremstyle{primetheorem}
\newtheorem{primetheorem}{Theorem}
\newenvironment{claimproof}{%
\let\origqed=\qedsymbol%
\renewcommand{\qedsymbol}{$\blacktriangleleft$}%
\begin{proof}}{\end{proof}\let\qedsymbol=\origqed}
\numberwithin{equation}{section}
\renewcommand{\binom}[2]{\ensuremath{\mleft(\kern-.1em\genfrac{}{}{0pt}{}{#1}{#2}\kern-.1em\mright)}}    
\newcommand{\inbinom}[2]{\ensuremath{\bigl(\kern-.1em\genfrac{}{}{0pt}{}{#1}{#2}\kern-.1em\bigr)}} 
\def\moverlay{\mathpalette\mov@rlay}
\def\mov@rlay#1#2{\leavevmode\vtop{%
  \baselineskip\z@skip \lineskiplimit-\maxdimen
  \ialign{\hfil$\m@th#1##$\hfil\cr#2\crcr}}}
\newcommand{\charfusion}[3][\mathord]{
    #1{\ifx#1\mathop\vphantom{#2}\fi
        \mathpalette\mov@rlay{#2\cr#3}
      }
    \ifx#1\mathop\expandafter\displaylimits\fi}
\newcommand{\COMMENT}[1]{}
\newcommand{\COMNEW}[1]{}
\renewcommand{\COMNEW}[1]{\footnote{\textcolor{red!70!black}{#1}}} 
\title{Long running times for hypergraph bootstrap percolation}
\author[A.~Espuny D\'iaz]{Alberto Espuny D\'iaz}
\email{alberto.espuny-diaz@tu-ilmenau.de}
\address[Espuny D\'iaz]{Institut f\"ur Mathematik, Technische Universit\"at Ilmenau, 98684 Ilmenau, Germany.}
\author[B.~Janzer]{Barnab\'as Janzer}
\email{bkj21@cam.ac.uk}
\address[Janzer]{Department of Pure Mathematics and Mathematical Statistics, University of Cambridge, Wilberforce Road, Cambridge CB3 0WB, United Kingdom.}
\author[G.~Kronenberg]{Gal Kronenberg}
\email{kronenberg@maths.ox.ac.uk}
\address[Kronenberg]{Mathematical Institute, University of Oxford, Andrew Wiles Building, Radcliffe Observatory Quarter, Woodstock Road, Oxford, United Kingdom.}
\author[J.~Lada]{Joanna Lada}
\email{j.m.lada@lse.ac.uk}
\address[Lada]{London School of Economics and Political Science,
Houghton Street, London, WC2A 2AE, United Kingdom}
\thanks{The research leading to these results started during the \emph{Young Researchers in Combinatorics} workshop hosted by ICMS.
We are indebted to the ICMS and the organisers of the event.\\
A.~Espuny Díaz was partially funded by the Carl-Zeiss-Foundation and by DFG grant PE 2299/3-1.\\
B.~Janzer was funded by an EPSRC DTP Scholarship.\\
G.~Kronenberg was funded by the European Union’s Horizon
2020 research and innovation programme under the Marie Sk\l odowska Curie grant  No. 101030925.\\
J.~Lada was funded by an LSE PhD Studentship.}
\date{\today}
\begin{document}

\begin{abstract}
Consider the hypergraph bootstrap percolation process in which, given a fixed $r$-uniform hypergraph $H$ and starting with a given hypergraph $G_0$, at each step we add to $G_0$ all edges that create a new copy of $H$.
We are interested in maximising the number of steps that this process takes before it stabilises.
For the case where $H=K_{r+1}^{(r)}$ with $r\geq3$, we provide a new construction for $G_0$ that shows that the number of steps of this process can be of order $\Theta(n^r)$.
This answers a recent question of Noel and Ranganathan. 
To demonstrate that different running times can occur, we also prove that, if $H$ is $K_4^{(3)}$ minus an edge, then the maximum possible running time is $2n-\lfloor \log_2(n-2)\rfloor-6$.
However, if $H$ is $K_5^{(3)}$ minus an edge, then the process can run for $\Theta(n^3)$ steps.

\end{abstract}

\maketitle

\section{Introduction}

The \emph{hypergraph bootstrap percolation process} is an infection process on hypergraphs which was introduced by Bollob\'as in 1968 under the name of \emph{weak saturation}~\cite{Bollobas68}.
For an integer $r\geq2$ and a set $S$, denote by $\inbinom{S}{r}$ the set of all subsets of $S$ of size $r$.
Given an $r$-uniform hypergraph $H$ and a positive integer $n$, the \emph{$H$-bootstrap percolation process} is a deterministic process defined as follows.
We start with a given $r$-uniform hypergraph $G_0$ on vertex set $[n]\coloneqq\{1,\ldots,n\}$.
For each time step $t\geq1$, we define the hypergraph $G_t$ on the same vertex set $[n]$ by letting
\[E(G_t)\coloneqq E(G_{t-1})\cup \left\{e\in \binom{[n]}{r} : \exists \text{ an $H$-copy $H'$ s.t. $E(H')\nsubseteq E(G_{t-1})$ and $E(H')\subseteq E(G_{t-1})\cup\{e\}$}  \right\},\]
that is, $G_t$ is an $r$-uniform hypergraph on $[n]$ defined by including all edges of $G_{t-1}$ together with all edges $e\in\inbinom{[n]}{r}$ which create a new copy of $H$ with the edges of $G_{t-1}$.
The hypergraph $G_0$ is called the \emph{initial infection}, and the edges $E(G_t)\setminus E(G_{t-1})$ are said to be \emph{infected} at time $t$.
If there exists some $T\geq 0$ such that $G_T=K_n^{(r)}$, we say that $G_0$ \emph{percolates} under this process.
In the weak saturation interpretation, we say that the hypergraph $G_0$ is weakly $H$-saturated if $G_0$ is $H$-free and percolates under $H$-bootstrap percolation, that is, if there exists an ordering of $E(K_n^{(r)})\setminus E(G_0) = \{e_1,\dots,e_t\}$ such that the addition of $e_i$ to $G_0\cup \{e_1,\dots ,e_{i-1}\}$ will create a new copy of $H$, for every $i \in [t]$.

Given a fixed hypergraph $H$, one of the most studied extremal problems in this setting is establishing  the minimum size of an $n$-vertex hypergraph which is weakly $H$-saturated.
For the most basic case, where $r=2$ and $H=K_k$, it was conjectured by Bollob\'as~\cite{Bollobas68} that the minimum size of a weakly $K_k$-saturated $n$-vertex graph is $(k-2)n - \inbinom{k-1}{2}$.
This was independently confirmed by \citet{Alon85}, \citet{Frankl82} and \citet{kalai1985hyperconnectivity,Kalai84} using methods from linear algebra.
For the hypergraph case, the work of \citet{Frankl82} also settles this problem for $K_k^{(r)}$ with $r\geq3$.
This problem has also been studied for other graphs $H$, and for host graphs other than the complete graph, and other related settings; see, e.g.,~\cite{bulavka2021weak,moshkovitz2015exact,shapira2021weakly,Alon85,Pik01a,Pik01b,kronenberg2021weak}.

Even though the initial infection graphs which are solutions to the weak saturation problem have the minimum possible number of edges, it is interesting to note that, in all the known examples, they require only very few steps until the infection process stabilises.
For example, the weakly $K_k$-saturated graph achieving the minimum size is given by removing the edges of a clique of size $n-k+2$ from $K_n$, which means that only one step is needed in order to complete the infection process.
In this direction, Bollob\'as raised the problem of finding the initial infection for which the running time of the $H$-bootstrap percolation process is maximised.
This was previously studied in the related setting of neighbourhood percolation by Benevides and Przykucki~\cite{benevides2013slowly,benevides2015maximum,przykucki2012maximal}, and in the random setting by Gunderson, Koch and Przykucki~\cite{gunderson2017time}.

Here we consider this problem in the hypergraph bootstrap percolation setting.
Given a fixed $r$-uniform hypergraph $H$ and an $r$-uniform initial infection $G_0$, we define the \emph{running time} of the $H$-bootstrap percolation process on $G_0$ to be
\[M_H(G_0) \coloneqq \min\{t\geq0:G_t=G_{t+1}\}.\]
We denote the \emph{maximum running time} over all $r$-uniform hypergraphs $G_0$ on $n$ vertices as $M_H(n)$.
We shall simplify these notations to $M^r_k(G_0)$ and $M^r_k(n)$ when $H=K^{(r)}_k$ is the complete $r$-uniform hypergraph on $k$ vertices, and drop the superscript to $M_k(n)$ in the graph setting ($r=2$).
Note that a trivial upper bound for $M_H(n)$ is given by $\binom{n}{r}$, the total number of edges of $K_n^{(r)}$.

The simplest setting to consider is for graph bootstrap percolation and $H=K_k$.
For $k=3$, it is not hard to see that $M_3(n) = \lceil\log_2(n-1)\rceil$, where an extremal example is given by an $n$-path (see, e.g., \cite{BPRS17} for the details).
\citet{BPRS17} and independently \citet{Mat15} considered this problem for higher values of $k$.
By carefully analysing the growth of cliques during the percolation process, both groups of authors showed that $M_4(n)=n-3$. 
Moreover, for $k\geq 5$, \citet{BPRS17} obtained the lower bound $M_k(n)\geq n^{2-\alpha_k-o(1)}$, where $\alpha_k = ({k-2})/({\inbinom{k}{2}-2})$, using a probabilistic argument.
The authors of \cite{BPRS17} conjectured that $M_k(n)=o(n^2)$ for all $k\geq 5$.
However, in a subsequent paper, \citet{BKPS19} disproved this conjecture for $k\geq 6$, showing that the natural upper bound is tight up to a constant factor.
The authors of \cite{BKPS19} also improved the lower bound for $k=5$ to $M_5(n) \geq n^{2-O(1/\sqrt{\log n})}$, using Behrend's construction of `dense' 3-AP-free sets, and conjectured that $M_5(n)=o(n^2)$.
It remains an open problem to determine whether this is the case. 

In this paper we consider the question of the maximum running time when $H$ is an $r$-uniform hypergraph with $r\geq 3$.
This was recently investigated by \citet{NR22}.
By providing an explicit construction to establish the lower bound (noting the trivial upper bound of $\binom{n}{r}$), they proved the following theorem for the case $k\geq r+2$.

\begin{theorem}[Noel and Ranganathan \cite{NR22}]
Let\/ $r\geq 3$.
If\/ $k\geq r+2$, then\/ $M^r_k(n) = \Theta(n^r)$.
\end{theorem}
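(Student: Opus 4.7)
The upper bound $M_k^r(n) = O(n^r)$ is immediate, since each step of the $K_k^{(r)}$-bootstrap percolation process infects at least one new edge and there are only $\binom{n}{r} = O(n^r)$ possible edges in total.

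For the matching lower bound, the plan is to construct explicitly, for each sufficiently large $n$, an initial infection $G_0$ on $[n]$ whose $K_k^{(r)}$-percolation process runs for $\Omega(n^r)$ steps. The goal is to engineer $G_0$ so that the set $F$ of edges of $K_n^{(r)}$ missing from $G_0$ has size $\Theta(n^r)$, and there is a linear order on $F$ along which the edges become infected (nearly) one at a time.

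The concrete strategy is to choose a linear order $\prec$ on $\binom{[n]}{r}$ together with an enumeration $e_1 \prec e_2 \prec \dots \prec e_m$ of $F$, and then define $G_0 \coloneqq \binom{[n]}{r} \setminus F$. Writing $G_i \coloneqq G_0 \cup \{e_1, \dots, e_i\}$, one needs to verify two conditions: \emph{Forward step} $(i)$: for each $i$, there exists a $k$-subset $S_i \supseteq V(e_i)$ of $[n]$ such that $\binom{S_i}{r} \setminus \{e_i\} \subseteq E(G_{i-1})$, guaranteeing that $e_i$ becomes infected by the end of step $i$. \emph{No shortcut} $(ii)$: for each $i$ and each $k$-set $S \supseteq V(e_i)$, not all of $\binom{S}{r} \setminus \{e_i\}$ lies in $E(G_{i-2})$, so that $e_i$ is not infected earlier. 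Condition $(i)$ is typically ensured by design, since the construction can be made to include precisely the "witness" edges needed to complete the clique on $S_i$ once $e_{i-1}$ has been added.

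A sensible way to organise the construction is to treat the base case $k = r+2$ first, where the witness structure is tightest, and then reduce the general case $k \geq r+2$ to it via a padding argument: introduce $k-(r+2)$ auxiliary vertices together with a systematic family of edges incident to them, arranged so that every $K_{r+2}^{(r)}$-witness in the base construction extends to (and is the unique source of) a $K_k^{(r)}$-witness in the enlarged hypergraph. For the base case, a natural candidate is to partition $[n]$ into a few blocks of size $\Theta(n)$ and choose $\prec$ respecting a colex-like ordering on the intersections of edges with these blocks, with $F$ chosen so that each missing edge has a distinguished "pivot" vertex in a designated block; this rigidity is what makes it possible to single out, for each $e_i$, a unique (or nearly unique) candidate witness $S_i$.

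The main obstacle is condition $(ii)$: ruling out premature $K_k^{(r)}$-witnesses requires a careful case analysis on the possible locations of the $k-r$ extra vertices of a would-be witness $S$, exploiting the structure of $\prec$ and the sparseness of $F$ in the relevant directions. The delicate point is to include enough edges in $G_0$ for the intended infection chain to propagate, while keeping $G_0$ sparse enough in the right places to forbid shortcuts. Achieving this balance, and tracking the resulting infection times to accumulate $\Theta(n^r)$ of them, will constitute the bulk of the technical work.
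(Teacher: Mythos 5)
Your upper bound is fine, and your general framework (choose the set $F$ of initially missing edges together with a linear order, then verify a \emph{forward step} condition and a \emph{no shortcut} condition, the latter sufficing by monotonicity of completability) is a sound way to certify a slow process; it is essentially the ``civilised'' one-edge-per-step scheme that this line of work uses. But note that the statement you are proving is a cited result of Noel and Ranganathan, whose entire content on the lower-bound side is an \emph{explicit} initial infection realising $\Omega(n^r)$ steps; the paper at hand does not reprove it, and neither do you. Everything in your write-up after the framework is a declaration of intent: you never specify $F$, the order $\prec$, or the witness sets $S_i$, and you yourself concede that verifying condition $(ii)$ ``will constitute the bulk of the technical work.'' Saying that condition $(i)$ ``is typically ensured by design'' and that a colex-like block ordering is ``a natural candidate'' does not discharge either condition for any concrete hypergraph, so there is no proof of the lower bound here --- the construction is precisely the missing idea, not a routine detail. (Compare the $k=r+1$ case proved in this paper, where the construction with beachball hypergraphs and switching gadgets, plus a full case analysis excluding shortcuts, occupies the whole argument.)

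Your proposed reduction from general $k\geq r+2$ to $k=r+2$ by ``padding'' with $k-(r+2)$ auxiliary vertices is also unproved and is exactly the kind of step where condition $(ii)$ tends to break: auxiliary vertices attached by a ``systematic family of edges'' create many additional $k$-sets through which a missing edge could be completed early, so one must re-run the entire no-shortcut analysis in the enlarged hypergraph, and it is not clear the base construction survives this. For contrast, the reduction actually used in this area (e.g.\ Lemma~\ref{lemma_r=3} quoted in the paper) goes in the uniformity parameter $r$, starting from a civilised $3$-uniform construction, and its proof is itself nontrivial. As it stands, your submission is a plausible plan for a proof, with the two load-bearing components --- the explicit base construction and the padding lemma --- left as gaps.
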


For the case $k=r+1$, they established the following lower bound.

\begin{theorem}[Noel and Ranganathan \cite{NR22}]
Let\/ $r\geq 3$.
If\/ $k=r+1$, then\/ $M^r_k(n) = \Omega(n^{r-1})$.
\end{theorem}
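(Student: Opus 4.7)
The plan is to exhibit an explicit initial infection $G_0$ on $n$ vertices whose $K_{r+1}^{(r)}$-bootstrap percolation lasts $\Omega(n^{r-1})$ steps. The guiding idea is to engineer a chain of $\Omega(n^{r-1})$ edges, each of which is addable only strictly after its predecessor, so that the process is forced to take at least that many rounds.

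First, I would fix a small ``root'' set $A \subseteq [n]$ of constant size (of order $r$) on which $G_0$ contains a complete copy of $K_{r+1}^{(r)}$, and adjoin ``book-type'' edges of the form $A' \cup \{b\}$ for certain $(r-1)$-subsets $A' \subset A$ and peripheral vertices $b \in B \coloneqq [n] \setminus A$. A controlled amount of further seeding among peripheral tuples will be needed in order for the process to get started, but this must be done sparingly so as not to short-cut the chain.

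Next, I would rank the edges we intend to see added by a lexicographic order on a natural parameter, typically the $(r-1)$-tuple of peripheral vertices of each edge; this yields $\Theta(n^{r-1})$ ranks. Writing $\ell(e)$ for the rank of $e$, the main inductive claim is that for every $t \geq 0$, the graph $G_t$ equals $G_0$ together with all edges satisfying $\ell(e) \leq t$. Assuming this up to time $t$, the inductive step first requires showing \emph{progress}: one identifies the edge $e_{t+1}$ with $\ell(e_{t+1}) = t+1$ and a witness vertex $v_{t+1}$---most naturally a fixed vertex of $A$---such that the remaining $r$ edges of $e_{t+1} \cup \{v_{t+1}\}$ all lie in $G_0$ or have smaller rank, certifying that $e_{t+1}$ is addable at time $t+1$.

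The hard part will be the converse: checking that no edge $e$ with $\ell(e) > t+1$ becomes addable at time $t+1$. For each such $e$ and each candidate witness $v \in [n] \setminus e$, at least one of the $r$ edges in $(e \cup \{v\}) \setminus \{e\}$ must still have rank greater than $t$. There are up to $n - r$ candidate witnesses per edge and $r$ edges to inspect per witness, so the case analysis is substantial; the construction must be arranged so that (i) the intended witness through the root yields exactly one new addition per step, and (ii) no alternative witness---whether through a different root vertex or through a peripheral vertex---closes a later edge prematurely. Ensuring both requirements simultaneously is the combinatorial heart of the argument and is what dictates the precise choice of seed edges and of the lexicographic ordering used.
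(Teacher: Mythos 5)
This statement is quoted from Noel and Ranganathan~\cite{NR22} and is not reproved in this paper (whose own construction gives the stronger bound $\Theta(n^r)$), so your attempt has to stand on its own, and as written it is a template rather than a proof. The construction is never actually specified --- which peripheral seed edges, which ordering, how the chain is wired --- and the step you yourself call ``the combinatorial heart'', namely that no edge of rank greater than $t+1$ becomes addable at time $t+1$, is exactly where all the work lies; in the analogous arguments (the Claim in this paper, the ``civilised'' verification in \cite{NR22}) that check is a careful case analysis which dictates the entire choice of $G_0$, and deferring it leaves nothing proved.

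More seriously, the linking mechanism you sketch cannot create the required sequential dependence. If the rank $\ell(e)$ is determined by the $(r-1)$-tuple $P$ of peripheral vertices and the intended witness $v_{t+1}$ is a fixed vertex of the constant-size root set $A$, then the witnessing copy for $e_{t+1}=P\cup\{u\}$ (with $u\in A$) has vertex set $P\cup\{u,v_{t+1}\}$, and its edges other than $e_{t+1}$ are the book-type edges $(P\setminus\{p\})\cup\{u,v_{t+1}\}$, which you place in $G_0$, together with $P\cup\{v_{t+1}\}$, which has the \emph{same} peripheral tuple $P$ and hence the same rank $t+1$. So either $P\cup\{v_{t+1}\}\in E(G_0)$, in which case $e_{t+1}$ is already addable at time $1$ and the chain collapses, or it is absent at time $t$ and $e_{t+1}$ is not addable at time $t+1$ through this copy at all. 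In neither case does the copy contain an edge of rank exactly $t$, so adding the rank-$t$ edge never triggers the rank-$(t+1)$ edge. The constructions that do work (in \cite{NR22}, and the beachball construction here) make consecutive infected edges share $r-1$ vertices and take the witness to be the vertex just dropped from the \emph{previous} infected edge --- a peripheral/middle vertex, not a root vertex --- with the remaining $r-1$ edges of the witnessing copy placed in $G_0$; one must then design the full edge set so that a case analysis rules out premature completions. You would need to redesign your gadget along these lines and then actually carry out that verification.
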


This theorem leaves a gap between the lower bound and the trivial upper bound $M^r_{r+1}(n) = O(n^r)$.
Noel and Ranganathan conjectured that $M^3_{4}(n) = O(n^2)$~\cite[Conjecture~5.1]{NR22}, but suggested that, for sufficiently large $r$, it is indeed true that the maximum running time achieves $M^r_{r+1}(n) = \Theta(n^r)$~\cite[Question~5.2]{NR22}. 

In this paper, we show the conjecture to be false and prove that the trivial upper bound is in fact tight, up to a constant factor, for all $r\geq 3$.
This also gives a positive answer to their question, in a strong sense.

\begin{theorem}\label{theorem_bootstrapgeneral}
	For any fixed integer\/ $r\geq 3$, we have\/ $M_{r+1}^r(n)=\Theta(n^r)$.
\end{theorem}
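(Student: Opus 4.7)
The plan is straightforward at the outer level. The trivial upper bound $M_{r+1}^r(n) \leq \binom{n}{r} = O(n^r)$ follows from the fact that at least one edge is added at every non-terminal step. All the content is in the matching lower bound, for which we need an explicit initial infection $G_0 \subseteq \binom{[n]}{r}$ whose $K_{r+1}^{(r)}$-bootstrap process takes $\Omega(n^r)$ steps.

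The concrete task is to exhibit an ordered sequence $f_1, \ldots, f_T$ of $r$-subsets of $[n]$, with $T = \Omega(n^r)$ and $f_t \notin G_0$, such that $f_t$ is first infected precisely at step $t$. To prescribe such a schedule we designate, for each $t$, a witness vertex $u_t \notin f_t$ and demand two properties: (i) the $r$ edges $\{u_t\} \cup (f_t \setminus \{v\})$, for $v \in f_t$, all lie in $G_0 \cup \{f_1, \ldots, f_{t-1}\}$, so that $f_t$ is infected at step $t$; and (ii) for every pair $(u,s)$ with $u \in [n] \setminus f_t$ and $s < t$, at least one of the edges $\{u\} \cup (f_t \setminus \{v\})$ is missing from $G_0 \cup \{f_1, \ldots, f_{s-1}\}$, so no earlier infection of $f_t$ is possible.

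For the construction itself, the most promising route looks \emph{layered} or \emph{recursive}. One would partition $[n]$ into nested levels $L_1 \subsetneq L_2 \subsetneq \cdots \subsetneq L_n = [n]$ with $|L_i| = i$, and engineer $G_0$ so that, once the process has finished processing $L_{i-1}$ and activates a new vertex of $L_i$, it is forced to run a slow sub-process of length $\Omega(i^{r-1})$ inside the link of that newly activated vertex (using Noel--Ranganathan-type ideas, or an appropriate adaptation of the $K_r^{(r-1)}$-bootstrap process on $i$ vertices). Telescoping $\sum_{i=1}^n i^{r-1} = \Theta(n^r)$ then yields the desired bound. A possibly cleaner alternative is a direct, lex-based construction where the $f_t$ enumerate $r$-subsets of a structured index set in lex order and each witness $u_t$ is a suitable "predecessor" vertex.

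The main obstacle will be verifying~(ii), namely that no witness can trigger $f_t$ prematurely. To control this, I expect to maintain, by induction on $t$, a structural invariant on the link of every vertex $u \in [n]$---essentially a description of which $(r-1)$-subsets are already present in each link at each time---and to prove that, outside the scheduled witness $u_{t+1}$, no vertex's link can yet contain the full $(r-1)$-skeleton $\binom{f_{t+1}}{r-1}$. This bookkeeping, together with the case analysis needed to verify it under every type of new infection, is where the bulk of the proof effort should lie.
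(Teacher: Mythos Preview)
Your proposal is a plan, not a proof: you never commit to a concrete $G_0$, and the two candidate routes you sketch both have real problems.

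The recursive route, as you describe it, cannot work for $r=3$. You want each newly activated vertex to trigger a sub-process of length $\Omega(i^{r-1})$ ``inside its link''. But the link of a vertex in the $K_{r+1}^{(r)}$-process is governed by $K_r^{(r-1)}$-bootstrap, and for $r=3$ that is the graph process with $H=K_3$, whose maximum running time is only $\Theta(\log n)$. So the induction has no base case: telescoping $\sum_i \log i$ gives $O(n\log n)$, nowhere near $n^3$. Any inductive scheme that bottoms out at $r=3$ must do something genuinely new there; you have not said what. The ``lex-based'' alternative is even less specified, and the obstacle you yourself flag---ruling out every unscheduled witness at every time step---is exactly the hard part, for which you offer no mechanism.

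The paper's proof goes in the opposite direction. Rather than recursing \emph{down} to smaller uniformity, it invokes a lemma of Noel and Ranganathan that lifts a sufficiently well-behaved (``$K_4^{(3)}$-civilised'') construction for $r=3$ up to all $r\geq 3$. All the work is then a single explicit $3$-uniform construction on $\Theta(n)$ vertices with running time $\Theta(n^3)$: the vertex set is split into ``top'', ``bottom'', and ``middle'' layers plus a few dummy vertices, the bulk of the running time comes from repeatedly traversing a path of middle vertices (a ``beachball'' for each top/bottom pair), and small gadgets swap out the bottom and top vertices one at a time so that the process cycles through $\Theta(n^2)$ such pairs, each contributing $\Theta(n)$ steps. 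The civilised property (exactly one new edge and one new $K_4^{(3)}$ per step, with a specific chain structure) is verified by a direct case analysis on the possible $4$-vertex sets, which is tractable precisely because the construction is so rigid.
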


Another proof for \cref{theorem_bootstrapgeneral} was independently announced by \citet{HL22}.

We note that \cref{theorem_bootstrapgeneral} establishes a clear difference with respect to the graph case $r=2$, where $M_k(n)=o(n^r)$ for $k\in\{r+1,r+2\}$ (and possibly also $r+3$).
It may therefore seem that the behaviour of hypergraph bootstrap percolation is less rich than its graph counterpart. We propose a modification of the problem above that shows this is not the case, and that different (and very interesting) asymptotic running times may still occur in the hypergraph setting.

Indeed, recall that we may think of $H$-bootstrap percolation as an infection process where the infection spreads to a new copy of $H$ if only one edge of said copy was not infected in the previous step.
It is reasonable then to consider models where the infection is more powerful, in the sense that it will extend to copies of $H$ which are missing at most $m$ edges, for some fixed integer $m$.
We consider here in particular the case $m=2$. 
Note that if $m=2$ and $H$ is a complete hypergraph (which is the case we will focus on), then this modified model is equivalent to the original hypergraph percolation process for the hypergraph $H'$ obtained by deleting an arbitrary edge from $H$.

Formally, let $H$ be a given $r$-uniform hypergraph, and let $G$ be an $r$-uniform hypergraph on $[n]$.
For each copy $H'$ of $H$ on $[n]$, if $|E(H')\setminus E(G)|\leq m$, we say that $H'$ is \emph{$m$-completable} in $G$.
We define the $(H,m)$-bootstrap percolation process on an initial infection $G_0$ on $[n]$ to be the sequence of hypergraphs $G_0,G_1,\ldots$ on $[n]$ given by setting, for each $t\geq1$,
\[E(G_t)\coloneqq\bigcup_{\substack{H'\text{ copy of }H\text{ on }[n]\\H'\ m\text{-completable in }G_{t-1}}}E(H').\]
Note that the $(H,1)$-bootstrap percolation process simply corresponds to the usual $H$-bootstrap percolation process.
Let us denote the running time of this hypergraph percolation process as $M_{(H,m)}(G_0)\coloneqq\min\{t\geq0:G_t=G_{t+1}\}$, and the maximum running time over all $r$-uniform $n$-vertex hypergraphs $G_0$ as $M_{(H,m)}(n)$.
The next result shows that we get interesting new behaviour when $m=2$ and $H=K_4^{(3)}$ (which is probably the most natural first case to consider).

\begin{theorem}\label{prop:linear}
For all\/ $n\geq 4$, we have\/ $M_{(K_{4}^{(3)},2)}(n)=2n-\lfloor\log_2(n-2)\rfloor-6$.
\end{theorem}

It is worth remarking here that this is the first nontrivial exact result about running times of hypergraph bootstrap percolation.
The only nontrivial exact results in graph bootstrap percolation are those for $K_3$- and $K_4$-bootstrap percolation~\cite{BPRS17}.

We also prove that in the next case, $H=K_5^{(3)}$, the running time can once again be cubic (i.e., as large as possible).

\begin{theorem}\label{thm:cubic}
We have\/ $M_{(K_{5}^{(3)},2)}(n)=\Theta(n^3)$.
\end{theorem}

Let ${K}_s^{(r)}-e$ denote the hypergraph obtained by deleting an edge from ${K}_s^{(r)}$.
As mentioned above, the $({K}_s^{(r)},2)$-process is the same as the usual bootstrap percolation process for ${K}_s^{(r)}-e$, so the results above can be reformulated as follows.\medskip

\setcounter{primetheorem}{3}
\begin{primetheorem}\label{thmp:1}
For all\/ $n\geq 4$, we have\/ $M_{{K}_{4}^{(3)}-e}(n)=2n-\lfloor\log_2(n-2)\rfloor-6$.
\end{primetheorem}

\begin{primetheorem}\label{thmp:2}
We have\/ $M_{{K}_{5}^{(3)}-e}(n)=\Theta(n^3)$.
\end{primetheorem}

We present our proof of \cref{theorem_bootstrapgeneral} in \cref{sect2}.
We defer the proofs of \cref{prop:linear,thm:cubic} to \cref{sect3}.
We also propose some open problems in our concluding remarks.

\section{Long running times for simple infections}\label{sect2}

In order to prove \cref{theorem_bootstrapgeneral}, we will use a result of Noel and Ranganathan~\cite{NR22} that allows us to focus on the case $r=3$.
To state their result, we need to recall some definitions from~\cite{NR22}.
Let $G_0$ be an $r$-uniform hypergraph, let $G_t$ be the hypergraph at time $t$ for the $K_{r+1}^{(r)}$-bootstrap process starting with $G_0$ as initial infection, and let $T=M_{r+1}^r(G_0)$ be the time the process stabilises.
We say that $G_0$ is \emph{$K_{r+1}^{(r)}$-civilised} if the following conditions are satisfied for some edge $e_0$ of $G_0$.
\begin{enumerate}
	\item For each $t\in[T]$, $G_t$ contains only one more edge $e_t$ than $G_{t-1}$, and one more copy $H_t$ of $K_{r+1}^{(r)}$.
	\item For all $t\in[T]$ we have $E(H_t)\cap\{e_0,e_1,\dots,e_T\}=\{e_{t-1},e_t\}$.
	\item The $K_{r+1}^{(r)}$-bootstrap percolation process starting with $G_0-e_0$ infects no edge.
\end{enumerate}

\begin{lemma}[Noel and Ranganathan~{\cite[Lemma~2.11]{NR22}}] \label{lemma_r=3}
If for all\/ $n$ there exists a\/ $K_4^{(3)}$-civilised hypergraph\/ $G_0$ on\/ $\Theta(n)$ vertices such that\/ $M_4^3(G_0)=\Theta(n^3)$, then for all\/ $r\geq 3$ we have\/ $M_{r+1}^r(n)=\Theta(n^r)$.
\end{lemma}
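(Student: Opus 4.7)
The plan is to lift a $K_4^{(3)}$-civilised hypergraph $G_0$ on $m=\Theta(n)$ vertices with $M_4^3(G_0)=\Theta(m^3)$ (supplied by the hypothesis) into an initial infection for the $K_{r+1}^{(r)}$-bootstrap process on $\Theta(n)$ vertices whose running time is $\Theta(n^r)$. Since the trivial bound $M_{r+1}^r(n)\leq\binom{n}{r}=O(n^r)$ handles the upper bound, it suffices to exhibit such a construction.

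\emph{Cone lift.} First I would examine the natural cone construction: introduce $r-3$ new vertices $W=\{w_1,\dots,w_{r-3}\}$ and let the initial infection on $V(G_0)\cup W$ consist of every $r$-edge except those of the form $e\cup W$ with $e\in\binom{V(G_0)}{3}\setminus E(G_0)$. A case analysis on an $(r+1)$-vertex set $S^*$ splits into: (i) $W\not\subset S^*$, in which case every $r$-edge inside $S^*$ is already present and no dynamics occurs; and (ii) $S^*=W\cup S$ with $S\in\binom{V(G_0)}{4}$, in which case the $r$-edges $\{T\cup W:T\in\binom{S}{3}\}$ of the potential $K_{r+1}^{(r)}$ copy are in bijection with the four 3-edges of a possible $K_4^{(3)}$ copy on $S$ in $G_0$. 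Thus the $r$-process faithfully simulates the 3-process on $G_0$, inherits the civilised property, and runs for exactly $M_4^3(G_0)=\Theta(m^3)$ steps. This falls short of $\Theta(n^r)$ by a factor of $n^{r-3}$.

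\emph{Sequential multi-marker boost.} To gain the missing factor, I would replace the single marker $W$ by a family $\mathcal{W}=\{W^{(1)},\dots,W^{(s)}\}$ of size $s=\Theta(n^{r-3})$ drawn from an auxiliary pool of $\Theta(n)$ vertices, linearly ordered so that $|W^{(i)}\triangle W^{(i+1)}|=2$ (a Gray-code-style listing, realised as a Hamilton path in the Johnson graph $J(\Theta(n),r-3)$). The composite initial infection $\widehat{G}_0$ is built by superimposing the cone lifts for each $W^{(i)}$, with a small number of "trigger" edges removed so that only layer $1$ is initially live; the single vertex swapped between $W^{(i)}$ and $W^{(i+1)}$ should be placed so that the last edge infected in layer $i$ is precisely the trigger required to ignite layer $i+1$. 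If this orchestration goes through, the process cycles through all $s$ layers one after another, for a total of $s\cdot M_4^3(G_0)=\Theta(n^{r-3})\cdot\Theta(n^3)=\Theta(n^r)$ steps on $\Theta(n)$ vertices.

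The main obstacle—and where I expect the real work to sit—is verifying that $\widehat{G}_0$ is itself globally $K_{r+1}^{(r)}$-civilised. One must rule out that any $(r+1)$-vertex set straddling two distinct markers $W^{(i)},W^{(j)}$ accidentally supports a premature or extra $K_{r+1}^{(r)}$ copy, and check that removing the starter of layer $1$ kills the entire cascade. This pressure forces $\mathcal{W}$ to be chosen so that any two distinct markers intersect in at most $r-4$ vertices, and forces the trigger-removal to be designed so that each handover is atomic. The civilised property of $G_0$ is what makes this plausible: its single-edge-per-step path structure means each handover edge has a unique pre-assigned role as both "terminator of layer $i$" and "initiator of layer $i+1$", so the $s$ layers can be spliced into a single civilised chain without cross-layer interference, completing the lower bound $M_{r+1}^r(n)=\Omega(n^r)$ and hence the lemma.
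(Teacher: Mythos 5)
The paper cites this lemma from Noel and Ranganathan~\cite{NR22} without reproducing its proof, so there is no in-document argument to compare against; I will assess your attempt on its own merits. Your cone-lift step is correct: introducing a disjoint $(r-3)$-set $W$ and deleting exactly the $r$-edges $e\cup W$ for non-edges $e$ of $G_0$ makes the $K_{r+1}^{(r)}$-process on $(r+1)$-sets containing $W$ simulate the $K_4^{(3)}$-process on $G_0$, while $(r+1)$-sets not containing all of $W$ are already complete and inert. This preserves the civilised property and gives running time $\Theta(n^3)$, and the case analysis you give is sound.

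The \emph{sequential multi-marker boost} is where the proof breaks, and not merely because the verification is ``left to sit''; the construction as described is not civilised and does not yield $\Theta(n^r)$. Take two markers $W^{(i)},W^{(j)}$ with $|W^{(i)}\cap W^{(j)}|=r-4$ (forced for consecutive markers by your Gray-code requirement, and unavoidable for a huge number of non-consecutive pairs when drawing $\Theta(n^{r-3})$ markers from a pool of $\Theta(n)$ vertices) and any $\{a,b,c\}\in\binom{V(G_0)}{3}\setminus E(G_0)$. The $(r+1)$-set $S^*=W^{(i)}\cup W^{(j)}\cup\{a,b,c\}$ is missing, among its $r+1$ possible $r$-edges, the two edges $W^{(i)}\cup\{a,b,c\}$ and $W^{(j)}\cup\{a,b,c\}$, while every $r$-subset obtained by deleting one of $a,b,c$ contains only two $V(G_0)$-vertices and hence is never excluded; with a suitable Gray code $S^*$ is missing \emph{exactly} these two edges. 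Consequently, the instant the layer-$i$ pass infects $W^{(i)}\cup\{a,b,c\}$, $S^*$ becomes $K_{r+1}^{(r)}$ minus one edge and immediately fires $W^{(j)}\cup\{a,b,c\}$, a layer-$j$ edge. This happens for every such $\{a,b,c\}$ (there are $\Theta(n^3)$ of them) and for every $W^{(j)}$ adjacent to $W^{(i)}$, so it is not a stray ``trigger'' removable by pruning a few starter edges; it is a wholesale spill that advances all neighbouring layers in lockstep with layer $i$, ruining both the one-edge-per-step civilised structure and the $n^{r-3}$ gain. Your proposed safety condition (``any two distinct markers intersect in at most $r-4$ vertices'') is vacuous, since any two distinct $(r-3)$-sets satisfy it, and the appeal to the civilised property of $G_0$ is a category error: civilisedness controls dynamics \emph{within} one cone, whereas the spill above depends only on how $r$-sets straddle two markers, something $G_0$'s structure cannot see. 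The natural repair — and what a result of this ``$r=3$ implies all $r\geq 3$'' form strongly suggests — is to go up one uniformity at a time: from a $K_{r+1}^{(r)}$-civilised hypergraph on $\Theta(n)$ vertices with running time $\Theta(n^r)$, build a $K_{r+2}^{(r+1)}$-civilised one on $\Theta(n)$ vertices with running time $\Theta(n\cdot n^r)$ by chaining through $\Theta(n)$ singleton apex vertices. With a single-vertex marker there is no ``shared $r-4$ coordinates'' configuration to worry about, so the cross-layer problem does not arise, and civilisedness propagates cleanly up the induction.
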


Before we give the formal proof of Theorem~\ref{theorem_bootstrapgeneral}, let us give an informal description of the construction that gives a lower bound for the number of steps of the percolation process.
As noted above, by \cref{lemma_r=3} it is enough to consider the case $r=3$.
The main part of the construction consists of three layers of vertices: `top' vertices labelled $t_i$, `bottom' vertices labelled $b_j$, and `middle' vertices labelled $m_\ell$.
In each time step, just one new edge will become infected.
That infection will happen because one copy of $K_4^{(3)}$, which had only two edges present in the initial infection, has a third edge infected in the previous step of the process.

The process will  consist mainly of chains of infections, where we move from one chain to another by using special gadgets.
The chains will have the format of the so-called `beachball hypergraph'.
The vertex set of this hypergraph consists of one top and one bottom vertex, and some ordered vertices in the middle; the edges are the triples consisting of two consecutive middle vertices, and either the top or the bottom vertex. 
See \Cref{fig:M1} for an illustration.

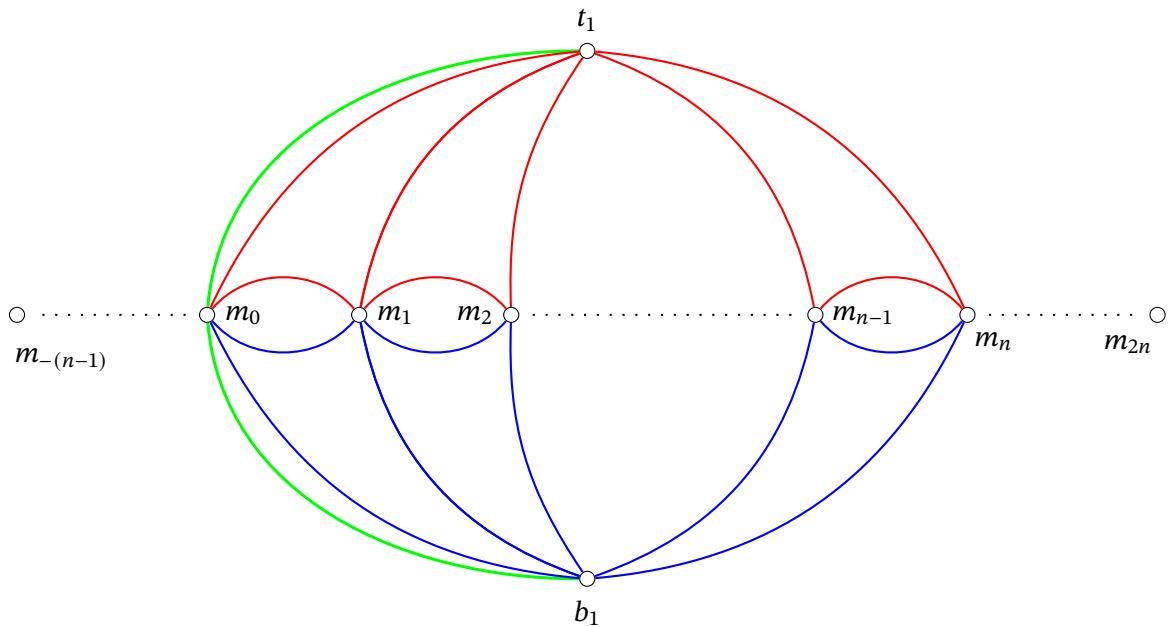
\begin{figure}[b]
\centering    
\begin{tikzpicture}

    
    \begin{scope}[every node/.style={circle, draw, inner sep=2pt}]
        \node[label={0:$m_0$}] (v1) at (0,0) {};
        \node[label={0:$m_1$}] (v2) at (2,0) {};
        \node[label={180:$m_2$}] (v3) at (4,0) {};
        \node[label={0:$m_{n-1}$}] (v8) at (8,0) {};
        \node[label={-45:$m_{n}$}] (vn) at (10,0) {};
        \node[label={90:$t_{1}$}] (t1) at (5,3.5) {};
        \node[label={270:$b_{1}$}] (b1) at (5,-3.5) {};
    \end{scope}
    
    
    \begin{scope}[every node/.style={circle, draw, inner sep=2pt}]
        \node[label={225:$m_{2n}$}] (v2n) at (12.5,0) {};
        \node[label={-45:$m_{-(n-1)}$}] (v-n+1) at (-2.5,0) {};
    \end{scope}
    
    
    \begin{scope}[thick]
        \draw[loosely dotted, shorten >=5pt, shorten <=5pt] (v3) -- (v8);
        
        \draw[loosely dotted, shorten >=5pt, shorten <=5pt] (vn) -- (v2n);
        \draw[loosely dotted, shorten >=5pt, shorten <=5pt] (v1) -- (v-n+1);
        
        \draw[line width=0.4mm, green] (t1) to[out=180, in=85] (v1);
        \draw[line width=0.4mm, green] (b1) to[out=180, in=275] (v1);

        \draw[red] (t1) to[bend left] (vn);

        \draw[red] (t1) to[bend right] (v1);
        \draw[red] (t1) to[bend right] (v2);
        \draw[red] (t1) to[bend right] (v2);
        \draw[red] (t1) to[bend right=18] (v3);
        \draw[red] (t1) to[bend left] (v8);

        \draw[red] (v1) to[bend left=50] (v2);
        \draw[red] (v2) to[bend left=50] (v3);
        \draw[red] (v8) to[bend left=50] (vn);

        \draw[blue] (b1) to[bend right] (vn);

        \draw[blue] (b1) to[bend left] (v1);
        \draw[blue] (b1) to[bend left] (v2);
        \draw[blue] (b1) to[bend left] (v2);
        \draw[blue] (b1) to[bend left=18] (v3);
        \draw[blue] (b1) to[bend right] (v8);

        \draw[blue] (v1) to[bend right=50] (v2);
        \draw[blue] (v2) to[bend right=50] (v3);
        \draw[blue] (v8) to[bend right=50] (vn);
    \end{scope}
\end{tikzpicture}
\caption{Initial infection $G_0$ showing only the first top and bottom vertices, $t_1$ and $b_1$.
Each red or blue triangle represents an edge of $G_0$, and together they form the first beachball hypergraph in our process.
The green arc represents an edge containing the vertices it passes through.
To form $G_1$, the edge $t_1m_1b_1$ is added, as this completes a copy of $K^{(3)}_4$ on $\{t_1,m_0,m_1,b_1\}$.
It is clear to see that subsequently all edges of the form $t_1m_ib_1$ for $i$ increasing from $2$ to $n$ are added in turn.}
\label{fig:M1}
\end{figure}

It will be convenient to think of the process as having $n$ phases, each phase having $\Theta(n)$ stages, and each stage having $\Theta(n)$ infection steps.
A phase will represent the infection process that occurs when we fix a top vertex $t_i$.
In each phase, we have $\Theta(n)$ stages, where each stage is the process that occurs when we fix $b_j$ (for the fixed $t_i$ of this phase).
At a specified phase and stage, the initial infected set will be the above mentioned beachball hypergraph, and the infection will spread through the middle vertices.
This gives $\Theta(n)$ infection steps for each stage.

The challenge will then be to move to another top or bottom vertex without infecting more than one edge in each step of the process.
For this purpose, we will introduce, at the end of each stage, a new middle vertex and a special `switching' gadget.
Each stage of the process will be represented by a tuple  of a top vertex $t_i$, a bottom vertex $b_j$, and consecutive middle vertices starting from $m_s$ and ending in $m_\ell$, where $-(n-1)\leq s\leq 0$ and $n\leq \ell\leq 2n$.
For moving between phases, we will introduce a different type of gadget.

Let us first describe the first few stages of the process to give a better intuition.
The first $n$ infection steps will come from a `path' on the middle layer: the edges $t_1m_\ell m_{\ell +1}$ and $b_1m_\ell m_{\ell +1}$ will be present at time zero for all $0\leq \ell \leq n-1$, as well as the edge $t_1b_1m_0$. So once the edge $t_1b_1m_\ell$ becomes infected, it propagates in the next step to $t_1b_1m_{\ell +1}$.
See Figure~\ref{fig:M1} for an illustration.

After $\Theta(n)$ such infections, we want to swap out $b_1$ to another bottom vertex (labelled $b_{-1}$).
We do this by making sure that the last infected edge using $b_1$ (namely, the edge $t_1b_1m_n$) makes the middle path longer, that is, it makes the edge $t_1m_nm_{n+1}$ infected in the next step.
To achieve this, we will have $b_1m_nm_{n+1}$ and $t_1b_1m_{n+1}$ present in the original hypergraph $G_0$; see Figure~\ref{fig:M2}.

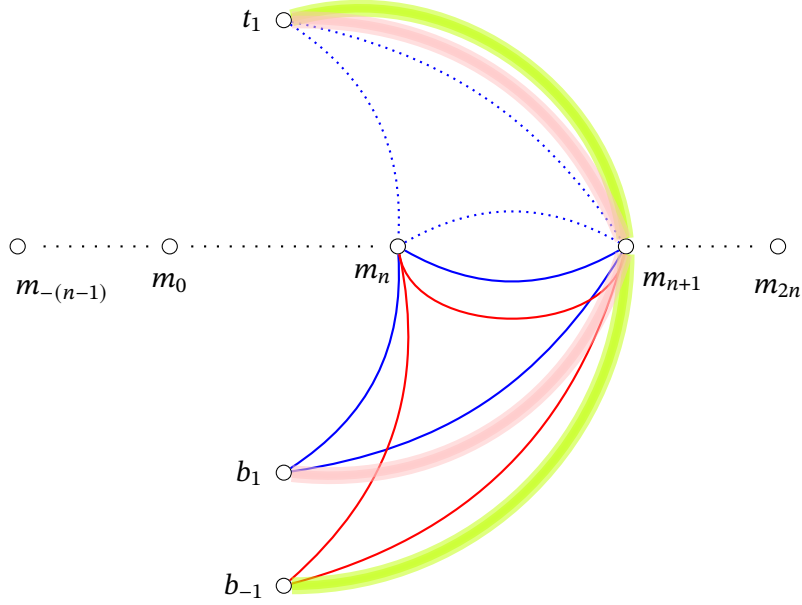
\begin{figure}[t]
\centering    

\begin{tikzpicture}
    
    \begin{scope}[every node/.style={circle, draw, inner sep=2pt}]
        \node[label={270:$m_0$}] (m0) at (4,0) {};
        
        \node[label={235:$m_{n}$}] (mn) at (7,0) {};

        \node[label={180:$t_{1}$}] (t1) at (5.5,3) {}; 
        \node[label={180:$b_{1}$}] (b1) at (5.5,-3) {};

        \node[label={180:$b_{-1}$}] (b-1) at (5.5,-4.5) {};
    \end{scope}
    
    
    \begin{scope}[every node/.style={circle, draw, inner sep=2pt}]
        \node[label={340:\ \ $m_{n+1}$}] (mn+1) at (10,0) {};
        \node[label={270:$m_{2n}$}] (m2n) at (12,0) {};
        \node[label={315:$m_{-(n-1)}$}] (m-n+1) at (2,0) {};
    \end{scope}
    
    
    \begin{scope}[thick]
        \draw[loosely dotted, shorten >=5pt, shorten <=5pt] (m0) -- (mn);
        
        \draw[loosely dotted, shorten >=5pt, shorten <=5pt] (mn+1) -- (m2n);
        \draw[loosely dotted, shorten >=5pt, shorten <=5pt] (m0) -- (m-n+1);
        
        \draw[blue] (b1) to[bend right=25] (mn+1);
        \draw[blue] (mn) to[bend right] (mn+1);
        \draw[blue] (b1) to[bend right] (mn);

        \draw[red] (b-1) to[bend right] (mn+1);
        \draw[red] (b-1) to[bend right] (mn);
        \draw[red] (mn) to[bend right=80] (mn+1);
        
        \draw[dotted, blue] (t1) to[bend left] (mn);
        \draw[dotted, blue] (t1) to[bend left=22] (mn+1);
        \draw[dotted, blue] (mn) to[bend left] (mn+1);
        
       \draw[lime, ultra thick, preaction= {draw, lime, -, double=lime, double distance=2\pgflinewidth,opacity=0.5},opacity=0.1] (t1) to[bend left=50] (mn+1);
        \draw[pink, ultra thick, preaction= {draw, pink, -, double=pink, double distance=2\pgflinewidth,opacity=0.5},opacity=0.1](t1) to[bend left=35] (mn+1);
       \draw[lime, ultra thick, preaction= {draw, lime, -, double=lime, double distance=2\pgflinewidth,opacity=0.5},opacity=0.1] (mn+1) to[bend left=45] (b-1);
        \draw[pink, ultra thick, preaction= {draw, pink, -, double=pink, double distance=2\pgflinewidth,opacity=0.5},opacity=0.1] (mn+1) to[bend left=40] (b1);
    \end{scope}
\end{tikzpicture}
\caption{Switching gadget to change from $K^{(3)}_4$ copies containing $b_1$ to those containing $b_{-1}$. The edges $b_1m_nm_{n+1}$ and $b_{-1}m_nm_{n+1}$ are present in the initial infection $G_0$.
After the edge $t_1m_nb_1$ is created by the percolation process, the copy of $K^{(3)}_4$ induced by the vertices $\{t_1, m_n,m_{n+1},b_1\}$ is present, except for the missing edge $t_1m_nm_{n+1}$ shown in the dotted blue line.
Thus this edge is added, followed by $t_1m_nb_{-1}$.
This triggers the process to run backwards and create all edges of form $t_1m_ib_{-1}$, for $i$ decreasing from $i=n-1$ to $i=0$, in turn.}
\label{fig:M2}
\end{figure}

Once $t_1m_nm_{n+1}$ is infected, it can start a chain of infections using the new bottom vertex $b_{-1}$.
However, this time the chain of infections will go in the opposite direction on the middle path: we will first infect $t_1b_{-1}m_n$ (for this we will need the edges $t_1b_{-1}m_{n+1}$ and $b_{-1}m_nm_{n+1}$ to be present initially, as in \Cref{fig:M2}), then we infect $t_1b_{-1}m_{n-1}$, and so on, until $t_1b_{-1}m_0$.

At this point we again swap out the bottom vertex to a different one (labelled $b_2$)--- we do this using the same trick as above, i.e., making the middle path one longer, and then changing the direction we traverse the path.
We keep repeating the steps above for $\Theta(n)$ bottom vertices to get $\Theta(n^2)$ infections which all use the same top vertex $t_1$.

Once we have the $\Theta(n^2)$ infections using $t_1$, we wish to swap out the top vertex $t_1$ to a different one (labelled $t_2$).
We could do this similarly to how we swapped the $b_j$'s, but it is more convenient to simply introduce a gadget using three `dummy' vertices $d_1, d_2, d_3$ to do this swap.
The last infection using $t_1$ (namely, $t_1m_{2n-1}m_{2n}$) will start a short chain of infections using the $K_4^{(3)}$'s given by $t_1m_{2n-1}m_{2n}d_1$, $m_{2n-1}m_{2n}d_1d_2$, $m_{2n}d_1d_2d_3$, $d_1d_2d_3t_2$, $d_2d_3t_2m_0$, and $d_3t_2m_0m_1$.
The last one of these allows us to start a repeat of the previous infection process, using $t_2$ instead of $t_1$.
We will use three such dummy vertices $d_i$ for each of the $n-1$ swaps at the top--- so only $3n-3=\Theta(n)$ dummy vertices in total.
See \Cref{fig:M4} for an illustration. \\

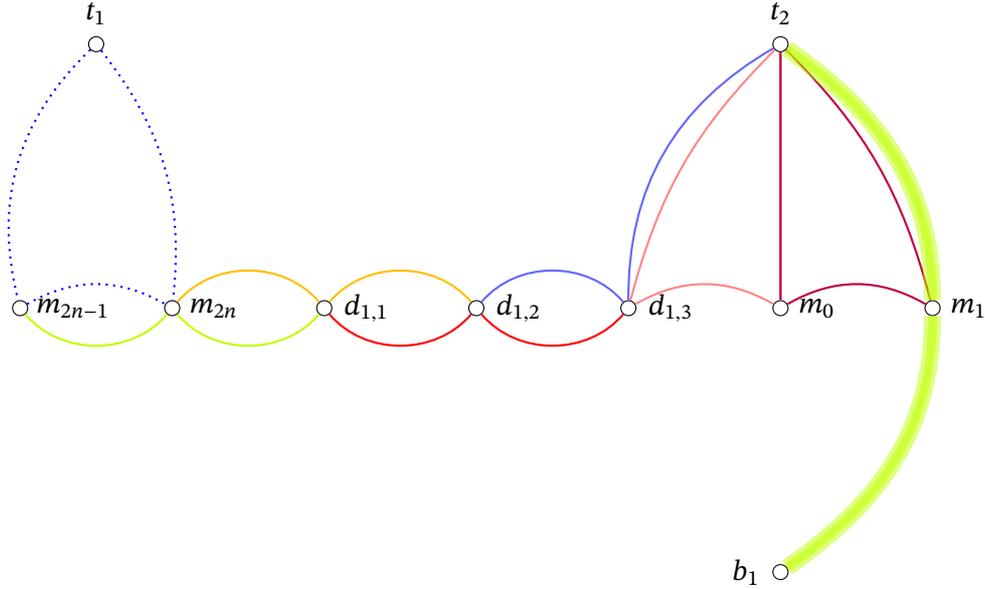
\begin{figure}
\centering    

\begin{tikzpicture}

    
    \begin{scope}[every node/.style={circle, draw, inner sep=2pt}]
        
        \node[label={0:$m_{2n-1}$}] (mn) at (0,0) {};
        \node[label={0:$m_{2n}$}] (mn+1) at (2,0) {};
        \node[label={0:$d_{1,1}$}] (d1) at (4,0) {};
        \node[label={0:$d_{1,2}$}] (d2) at (6,0) {};
        \node[label={0:$d_{1,3}$}] (d3) at (8,0) {};
        \node[label={0:$m_{0}$}] (m0) at (10,0) {};
        \node[label={0:$m_{1}$}] (m1) at (12,0) {};
        \node[label={90:$t_{1}$}] (t1) at (1,3.5) {}; 
        \node[label={90:$t_{2}$}] (t2) at (10,3.5) {};
        
        \node[label={180:$b_{1}$}] (b1) at (10,-3.5) {};
    \end{scope}
    
    
    \begin{scope}[every node/.style={circle, draw, inner sep=2pt}]
    \end{scope}
    
    
    \begin{scope}[thick]
        \draw[dotted, blue] (t1) to[bend right] (mn);
        \draw[dotted, blue] (t1) to[bend left=22] (mn+1);
        \draw[dotted, blue] (mn) to[bend left] (mn+1);
        
       \draw[lime] (mn) to[bend right=50] (mn+1);
       \draw[lime] (mn+1) to[bend right=50] (d1);
       \draw[orange!50!yellow] (mn+1) to[bend left=50] (d1);
       \draw[orange!50!yellow] (d1) to[bend left=50] (d2);
       \draw[red] (d1) to[bend right=50] (d2);
       \draw[red] (d2) to[bend right=50] (d3);
       \draw[blue!60!white] (d2) to[bend left=50] (d3);
       \draw[blue!60!white] (d3) to[bend left] (t2);
       \draw[pink!70!red] (d3) to[bend left=15] (t2);
       \draw[pink!70!red] (d3) to[bend left] (m0);
       \draw[pink!70!red] (t2) to[in =90, out=-90] (m0);
       \draw[purple] (m0) to[in =-90, out=90] (t2);
       \draw[purple] (m0) to[bend left] (m1);
       \draw[purple] (t2) to[bend left=15] (m1);
       
       \draw[lime, ultra thick, preaction= {draw, lime, -, double=lime, double distance=2\pgflinewidth,opacity=0.5},opacity=0.1] (t2) to[in=90, out= -35] (m1);
       \draw[lime, ultra thick, preaction= {draw, lime, -, double=lime, double distance=2\pgflinewidth,opacity=0.5},opacity=0.1] (m1) to[in= 35, out= -90] (b1);
    \end{scope}
\end{tikzpicture}
\caption{Switching gadget for changing the top vertex $t_1$ to $t_2$.
Edges present in the initial infection $G_0$ are omitted for clarity.
When the dotted blue edge $t_1m_{2n-1}m_{2n}$ is infected, this causes the edges along the chain to become infected, ending in $t_2m_0m_1$.
This triggers the infection of $t_2m_1b_1$, and in turn the process from the stage as shown in Figure~\ref{fig:M1}, with $t_1$ replaced with $t_2$.}
\label{fig:M4}
\end{figure}

Let us now turn to the formal proof of our theorem.

\begin{proof}[Proof of Theorem~\ref{theorem_bootstrapgeneral}]
By Lemma~\ref{lemma_r=3}, it suffices to consider the case $r=3$ and show that there are $K_4^{(3)}$-civilised $3$-uniform hypergraphs on $\Theta(n)$ vertices such that the $K_4^{(3)}$-bootstrap process takes $\Theta(n^3)$ steps to stabilise.
We now describe a construction achieving this.

The initial infection hypergraph $G_0$ has $9n-4=\Theta(n)$ vertices, which are labelled as follows: $t_1,\dots,t_n$, $b_1,\dots,b_n,b_{-1},\dots,b_{-(n-1)}$, $m_{-(n-1)}, m_{-(n-2)}\dots,m_{2n}$, and $d_{i,1},d_{i,2},d_{i,3}$ for $i\in[n-1]$.
The edges of $G_0$ are given below:
\begin{enumerate}[label=(\alph*)]
	\item\label{edges1} $t_1m_0m_1$;
	\item\label{edges2} $t_im_\ell m_{\ell +1}$ for all $i\in[n]$ and $\ell\in[n-1]$;
	\item\label{edges3} $b_jm_\ell m_{\ell +1}$ for all $j\in[n]$ and $\ell\in[-(j-1),n+j-1]$;
	\item\label{edges4} $b_{-j}m_\ell m_{\ell +1}$ for all $j\in[n-1]$ and $\ell\in[-j,n+j-1]$;
	\item\label{edges5} $t_ib_{j}m_{-(j-1)}$ and $t_ib_jm_{n+j}$ for all $i,j\in[n]$;
	\item\label{edges6} $t_ib_{-j}m_{n+j}$ and $t_ib_{-j}m_{-j}$ for all $i\in[n]$ and $j\in[n-1]$;
	\item\label{edges7} $t_im_{2n-1}d_{i,1}$, $t_im_{2n}d_{i,1}$, $m_{2n-1}m_{2n}d_{i,2}$, $m_{2n-1}d_{i,1}d_{i,2}$, $m_{2n}d_{i,1}d_{i,3}$, $m_{2n}d_{i,2}d_{i,3}$, $d_{i,1}d_{i,2}t_{i+1}$,\linebreak{} $d_{i,1}d_{i,3}t_{i+1}$, $d_{i,2}d_{i,3}m_0$, $d_{i,2}t_{i+1}m_0$, $d_{i,3}t_{i+1}m_1$, and $d_{i,3}m_0m_1$, for all $i\in[n-1]$.
\end{enumerate}

As mentioned in the informal discussion, it will be easier to think about the initial infected hypergraph as a set of beachball hypergraphs, and gadgets connecting between them.
For this purpose, we note the following.
\begin{itemize}
    \item The edges from \ref{edges2} and \ref{edges3}, as well as those from \ref{edges2} and \ref{edges4}, (nearly) form beachball hypergraphs.
    For the beachballs with edges from \ref{edges2} and \ref{edges3} the infection process increases with the indices of the middle vertices, whereas for those from \ref{edges2} and \ref{edges4} it decreases with the indices of the middle vertices.
    These hypergraphs are used as the main ingredients of the infection process. 
    \item The second type of edges from \ref{edges5} together with the first type of edges from \ref{edges6} form the gadgets that help us swap from $b_{j}$ to $b_{-j}$, where $t_i$ is fixed; that is, they help us move between the beachball with $t_i,b_j$ as top and bottom and the beachball with $t_i,b_{-j}$.
    \Cref{fig:M2} illustrates the gadget swapping from $b_1$ to $b_{-1}$.
    \item The first type of edges from \ref{edges5} and second type of edges from \ref{edges6} create the gadgets that help us swap from $b_{-j}$ to $b_{j+1}$, where $t_i$ is fixed; that is, these help us move from the beachball with $t_i,b_{-j}$ as top and bottom to the beachball with  $t_i,b_{j+1}$. 
    \item The edges in \ref{edges7} form the gadgets swapping between top vertices, from $t_i$ to $t_{i+1}$, using the dummy vertices $d_{i,s}$.
    In other words, these gadgets move us from the beachball with $t_i,b_{n}$ as top and bottom to the beachball with $t_{i+1},b_1$.
    \Cref{fig:M4} illustrates the gadget swapping from $t_1$ to $t_2$. 
\end{itemize}

We will show that there are three types of edges that are being infected during the process:
\begin{enumerate}[label=(\Roman*)]
    \item missing edges of the beachballs, that is, edges of the form $t_ib_jm_\ell$;
    \item edges from the gadgets swapping bottom vertices, of the form $t_im_{\ell}m_{\ell+1}$, and
    \item edges from the gadgets swapping top vertices (these have several different forms). 
\end{enumerate}

We will now name the edges being infected during the process.
For each $i,j\in [n]$, let $A_{i,j}$ denote the following sequence of edges:
\begin{equation}\label{equation_forwardstage}
A_{i,j}\coloneqq(t_ib_jm_{-(j-2)},\, t_ib_jm_{-(j-3)}, \dots,\, t_ib_jm_{n+j-1},\, t_im_{n+j-1}m_{n+j}).
\end{equation}
These will be the edges infected in the stage of phase $i$ corresponding to the bottom vertex $b_j$.
Similarly, for each $i\in [n]$ and $j\in[n-1]$, let
\begin{equation}\label{equation_backwardstage}
A_{i,-j}\coloneqq(t_ib_{-j}m_{n+j-1},\, t_ib_{-j}m_{n+j-2}, \dots,\, t_ib_{-j}m_{-(j-1)},\, t_im_{-(j-1)}m_{-j}).
\end{equation}
These will correspond to the stage with $b_{-j}$ as bottom vertex.
Concatenating these, we get the sequence $A_i$ of edges corresponding to phase $i$ (these are edges of types (I) and (II) above):
\[A_i\coloneqq A_{i,1}A_{i,-1}A_{i,2}A_{i,-2}\dots A_{i,n-1}A_{i,-(n-1)}A_{i,n}.\]
For the phase change using the dummy vertices $d_{i,j}$, let us write, for each $i\in [n-1]$, 
\begin{equation}\label{equation_dummysequence}
D_i\coloneqq(m_{2n-1}m_{2n}d_{i,1},\, m_{2n}d_{i,1}d_{i,2},\, d_{i,1}d_{i,2}d_{i,3},\, d_{i,2}d_{i,3}t_{i+1},\, d_{i,3}t_{i+1}m_0,\, t_{i+1}m_0m_1).
\end{equation}
These are the edges of Type (III).
Finally, let us write $A$ for the concatenation
\[A\coloneqq A_1D_1A_2D_2\dots A_{n-1}D_{n-1}A_n.\]
We will show that during the infection process, edges become infected one-by-one, according to the sequence $A$.

Let $T$ be the number of triples in $A$, and let $A=(e_1,e_2,\dots,e_T)$. Note that $T=\Theta(n^3)$.
Let us also write $e_0$ for the edge $t_1m_0m_1$, and for all $a\in[T-1]$ let $H_a$ be the copy of $K_4^{(3)}$ with vertex set $e_{a-1}\cup e_a$ (note that $|e_{a-1}\cap e_a|=2$ for all $a$).
For each $s\in[T]$, let us write $G_s$ for the hypergraph with edge set $E(G_0)\cup\{e_1,e_2,\dots,e_s\}$.
(Note that we do not yet know that these coincide with the hypergraphs obtained during the $K_4^{(3)}$-bootstrap process, but we will see that they do.)
Let us also write $G_{-1}\coloneqq G_0-e_0$.

\begin{claim}
Assume that\/ $s\in[-1,T]$ is an integer and\/ $e=x_1x_2x_3$ is a triple not contained in\/ $E(G_s)$.
Suppose that adding\/ $e$ to\/ $G_s$ completes a copy of\/ $K^{(3)}_{4}$ whose fourth vertex is\/ $x_4$.
Then,\/ $s\in[0,T-1]$,\/ $e=e_{s+1}$ and\/ $\{x_1,x_2,x_3,x_4\}=V(H_{s+1})$.
\end{claim}

We note here that the case $s=-1$ is needed to formally justify that $G_0$ is $K_4^{(3)}$-civilised below.

\begin{claimproof}
We consider the following two cases.

	 \textbf{Case 1}: a vertex $d_{i,c}$ appears among $x_1,\dots,x_4$ (for some $i\in[n]$ and $c\in[3]$).
	 Let us temporarily write $d_{i,-2}\coloneqq t_i$, $d_{i,-1}\coloneqq m_{2n-1}$, $d_{i,0}\coloneqq m_{2n}$, $d_{i,4}\coloneqq t_{i+1}$, $d_{i,5}\coloneqq m_0$ and $d_{i,6}\coloneqq m_1$, so the edges $d_{i,a}d_{i,a+1}d_{i,a+3}$ and $d_{i,a}d_{i,a+2}d_{i,a+3}$ are present in $G_0$ for all $-2\leq a\leq 3$.
	 Observe that the only vertices appearing in an edge of $G_s$ together with $d_{i,c}$ (recall $1\leq c\leq 3$) are of the form $d_{i,a}$ with $|c-a|\leq 3$ (see~\ref{edges7} as well as~\eqref{equation_dummysequence}).
	 Hence, $x_1,\dots,x_4$ are all of the form $d_{i,a}$ for some $-2\leq a\leq 6$.
	 Observe furthermore that every edge of $G_s$ of the form $d_{i,p}d_{i,q}d_{i,r}$ ($-2\leq p<q<r\leq 6$) satisfies $|r-p|\leq 3$, or $(p,q,r)=(-2,5,6)$ or $(p,q,r)=(-1,0,4)$.
	 It is easy to deduce that the only possible quadruples of vertices $d_{i,a}$ forming a $K^{(3)}_4$ minus an edge are of the form $\{x_1,x_2,x_3,x_4\}=\{d_{i,a},d_{i,a+1},d_{i,a+2},d_{i,a+3}\}$ (for some $-2\leq a\leq 3$).
	 So exactly one of $d_{i,a}d_{i,a+1}d_{i,a+2}$ and $d_{i,a+1}d_{i,a+2}d_{i,a+3}$ appears in $G_s$, as the other two triples appear in $G_0$ (recall~\ref{edges7}).
	 Since these are the edges $e_N$ and $e_{N+1}$, respectively, for some $N\in[T-1]$, we must have $e=e_{N+1}$ and $e_N\in G_s$.
	 So $N=s$, $e=e_{s+1}$, and $\{x_1,x_2,x_3,x_4\}=\{d_{i,a},d_{i,a+1},d_{i,a+2},d_{i,a+3}\}=e_{s}\cup e_{s+1}=V(H_{s+1})$, as claimed.
	 
	 \textbf{Case 2}: no vertex of the form $d_{i,c}$ ($c\in[3]$) appears among $x_1,\dots,x_4$.
	 Then, $x_1,\dots,x_4$ are all of the form $t_i, b_j$ or $m_\ell$.
	 Observe that no pair of the form $t_it_{i'}$ or $b_jb_{j'}$ appears simultaneously in an edge of $G_s$ ($i\not =i', j\not =j'$), so $X=\{x_1,\dots,x_4\}$ contains at most one vertex of the form $t_i$ and at most one vertex of the form $b_j$.
	 So it must contain at least two vertices of the form $m_\ell $.
	 But $m_\ell $ and $m_{\ell '}$ appear simultaneously in an edge only if $|\ell-\ell'|\leq 1$.
	 It follows that $X$ must be of the form $\{t_i,b_j,m_\ell ,m_{\ell +1}\}$ for some $i,j,\ell$.
	 Assume that $j>0$ (the case $j<0$ is similar).
	 If $\ell\leq -j$, then neither $t_ib_jm_\ell $ nor $b_jm_\ell m_{\ell +1}$ appear in $G_s$ (see~\ref{edges3}, \ref{edges5} and~\eqref{equation_forwardstage}), giving a contradiction.
	 Similarly, if $\ell\geq n+j$, then neither $t_ib_jm_{\ell +1}$ nor $b_jm_\ell m_{\ell +1}$ appear in $G_s$, again giving a contradiction.
	 Hence, we have $-(j-1)\leq \ell\leq n+j-1$.
	 It follows that $b_jm_\ell m_{\ell +1}$ is an edge of $G_0-e_0$.
	 So $e$ is one of $t_ib_jm_\ell $,  $t_ib_jm_{\ell +1}$ and $t_im_\ell m_{\ell +1}$.
	
	First, consider the case $e=t_ib_jm_\ell $.
	Since $t_ib_jm_{\ell +1}$ is already present, we must have $\ell=n+j-1$ (see~\ref{edges5}).
	But if $t_im_\ell m_{\ell +1}=t_im_{n+j-1}m_{n+j}=e_N$ appears in $G_s$, then so does its preceding edge $e_{N-1}=t_ib_jm_{n+j-1}=t_ib_jm_\ell$ (see~\eqref{equation_forwardstage}), giving a contradiction.
	
	If the new edge is $e=t_ib_jm_{\ell +1}$, then $-(j-1)\leq \ell\leq n+j-2$.
	So we have $e=e_N$ for some $N\in[T]$, and the edge $e_{N-1}$ is either $t_ib_jm_\ell $ (if $\ell\not =-(j-1)$) or $t_im_{-(j-2)}m_{-(j-1)}=t_im_\ell m_{\ell +1}$ (if $\ell=-(j-1)$).
	In either case, we have $e_{N-1}\in E(G_s)$ and $e_N\not\in E(G_s)$, giving $s=N-1$, $e=e_{s+1}$, $\{x_1,x_2,x_3,x_4\}=e_{s}\cup e_{s+1}=V(H_{s+1})$, as claimed. 
	
	Finally, consider the case when the new edge is $e=t_im_\ell m_{\ell +1}$. So $t_ib_jm_\ell $ and $t_ib_jm_{\ell +1}$ are edges of $G_s$.
	Note that $t_ib_jm_\ell $ or $t_ib_jm_{\ell +1}$ is of the form $e_N$ for some $N\in[T]$.
	It follows that all edges $e_{N'}$ with $N'<N$ appear in $G_s$, so, in particular, $t_im_{\ell '}m_{\ell '+1}$ is in $G_s$ for all $-(j-1)\leq \ell'\leq n+j-2$.
	Hence $\ell=n+j-1$.
	But then there is some $M\in[T]$ such that  $e_M=t_im_\ell m_{\ell +1}$, and we have $e_{M-1}=t_ib_jm_{n+j-1}=t_ib_jm_{\ell }$, which appears in $G_s$.
	If follows that $s=M-1$, $e=e_{s+1}$ and $\{x_1,x_2,x_3,x_4\}=e_{s}\cup e_{s+1}=V(H_{s+1})$, as claimed.
\end{claimproof}
	
	It is straightforward to check that for all $s\in[T]$ we have $E(H_s)\setminus E(G_{s-1})=\{e_s\}$ and $E(H_s)\cap\{e_0,e_1,\dots,e_s\}=\{e_{s-1},e_s\}$.
	Using these observations and the claim above, we see that all conditions of being $K_4^{(3)}$-civilised are satisfied for $G_0$, and the result follows from Lemma~\ref{lemma_r=3}.
\end{proof}

\begin{remark}\label{rem:bound1}
It immediately follows from the construction and the proof above that our proposed initial infection has\/ $9n+O(1)$ vertices and that the infection process takes\/ $4n^3+O(n^2)$ steps\COMMENT{Fix the top vertex. For the stage of the process with the first bottom vertex we have $n$ steps. In each subsequent stage of the process, the number of steps increases by one. Moreover, we may have some other steps due to the gadgets that allow us to swap top or bottom vertices, but these are smaller order terms. The main order terms thus give us a total of \[n\sum_{i=0}^{2n-1}(n+i)=4n^3\]
steps}.
It therefore follows that\/ $M^3_4(n)\geq4n^3/9^3+O(n^2)$.\COMMENT{One could optimise the construction with some parameters and improve the leading constant to something like 0.006164..., but this is not worth doing since we do not expect our construction to be optimal with respect to the constant.}
We note that we have made no effort to optimise the leading constant.
\end{remark}

\section{Long running times for double infections}\label{sect3}

\subsection{Double infections for $K_4^{(3)}$}
We now move on to the proof of our results about the variant where we allow two edges to be infected at the same time if they together complete a copy of $H$.
We begin with \cref{prop:linear}, giving tight bounds in the case $H=K_4^{(3)}$.
Our approach is motivated by the proof of \citet{BPRS17} of the fact that $M_4(3)=n-3$, but both the construction and the proof of the upper bound are significantly more complicated here.
We start with an informal description of the infection process for the extremal construction.\medskip

We will construct the initially infected hypergraph inductively. Assume that for some $n$ we have already constructed a hypergraph $G_0$ on $n$ vertices $\{x_1,\dots,x_n\}$ for which the process runs for $T$ steps.
Furthermore, assume that $G_T$ is complete, but there exist two vertices $u,v\in \{x_1,\dots,x_n\}$ such that no edge of $G_{T-1}$ contains $u$ and $v$ simultaneously.
(These conditions might at first seem arbitrary, but they are satisfied in the obvious construction when $n=4$.)
Then, we can add another vertex $x_{n+1}$ and another edge $x_{n+1}uv$ to $G_0$ without changing the first $T$ steps of the infection process.
(Indeed, the process will only be altered if we create a new $2$-completable copy of $K_4^{(3)}$, and this requires having two edges sharing two vertices.)
Then, at time $T+1$, the new edges that become infected are all those of the form $x_{n+1}ws$ with $w\in W_1\coloneqq \{u,v\}$ and $s\in \{x_1,\dots,x_n\}\setminus W_1$.
Moreover, at time $T+2$, $x_1,\dots,x_{n+1}$ will form a complete hypergraph.
This gives a contruction on $n+1$ vertices with running time $T+2$.

To obtain our construction for $n+2$ vertices, notice that if we pick some $u_2\in \{x_1,\dots,x_n\}\setminus W_1$, then no edge of $G_{T}$ contains both $x_{n+1}$ and $u_2$.
So if we add a new vertex $x_{n+2}$ and a new edge $x_{n+2}x_{n+1}u_2$, then the first $T+1$ steps of the infection will remain unaffected by this change.
Furthermore, one can check that at time $T+2$ the edges containing $x_{n+2}$ are given as $x_{n+2}x_{n+1}w$ and $x_{n+2}u_2w$ with $w\in W_1$.
Moreover, at time $T+3$ the edges $x_{n+2}ws$ with $w\in W_2\coloneqq W_1\cup\{u_2,x_{n+1}\}$ and $s\in\{x_1,\dots,x_{n+1}\}\setminus W_2$ will become infected, and at time $T+4$ we get a complete hypergraph.

We can keep repeating these steps: take some $u_{j}\in\{x_1,\ldots,x_n\}\setminus W_{j-1}$, add a new vertex $x_{n+j}$ and a new edge $x_{n+j}x_{n+j-1}u_{j}$.
This will extend the process by $2$ steps, and at time $T+2j-1$ the edges containing the new vertex $x_{n+j}$ will be of the form $x_{n+j}ws$ with $w\in W_{j}\coloneqq W_{j-1}\cup\{u_{j},x_{n+j-1}\}$ and $s\in \{x_1,\dots,x_{n+j-1}\}$.
Moreover, at time $T+2j$ our hypergraph will contain all edges on $\{x_1,\dots,x_{n+j}\}$.
This means that we can keep adding a vertex and extending the process by $2$ steps each time.
However, the set $W_j$ is growing, and at some point it will contain all of our vertices.
When this happens, we will no longer be able to pick an appropriate $u_{j}$, and we will `lose' 1 step of the infection process (i.e., by adding a new vertex we can only extend the process by $1$ step at this point).
So `usually' adding a vertex extends the infection by $2$ steps, giving the leading term $2n$ for the running time, but sometimes (when $W$ becomes everything) we only gain one extra step, and this will contribute the term $-\lfloor\log_2(n-2)\rfloor$.
\medskip

Let us now start the formal construction. Let $G_0$ be any $3$-uniform hypergraph on some vertex set $V$, and consider the corresponding $(K_4^{(3)},2)$-process $G_0,G_1,\ldots$
Let $T$ be the running time of this process.
We say that $G_0$ is \emph{nice} if $T\not=0$, $G_T$ is complete, and there exist distinct vertices $u,v\in V$ such that no edge of $G_{T-1}$ contains both $u$ and $v$.
The following lemma will be used to obtain the lower bound.

\begin{lemma}\label{lemma_doubleinfectionconstruction}
    Suppose that there is a nice hypergraph on\/ $k\geq 4$ vertices such that the corresponding\/ $(K_4^{(3)},2)$-process has running time\/ $T$.
    Then, for all\/ $\ell\in[k+1,2k-3]$ we have
    \[M_{(K_4^{(3)},2)}(\ell)\geq T+2(\ell-k).\]
    Furthermore, 
    \[M_{(K_4^{(3)},2)}(2k-2)\geq T+2k-5,\]
    and there exists a nice hypergraph on\/ $2k-2$ vertices whose corresponding\/ $(K_4^{(3)},2)$-process has running time\/ $T+2k-5$.
\end{lemma}

\begin{proof}
Let $G_0$ be a nice $3$-uniform hypergraph on $k$ vertices $x_1,\dots,x_k$ such that the corresponding $(K_4^{(3)},2)$-process $G_0,G_1,\dots$ has running time $T$, $G_T$ is complete, and $x_{1},x_k$ do not appear in any edge of $G_{T-1}$ simultaneously.
Let $\ell\in[k+1,2k-2]$ be arbitrary.
We define a hypergraph $G_0'$ on a vertex set $\{x_1,\dots,x_\ell\}$ of size $\ell$ as follows.
For any $i\in[\ell-k]$, let 
\[e_i\coloneqq x_{k+i}x_{k+i-1}x_{i},\]
and let $\mathcal{E}\coloneqq\{e_i:i\in[\ell-k]\}$.
Then, set
\[E(G_0')\coloneqq E(G_0)\cup\mathcal{E}.\]
Let $G_0',G_1',\dots$ be the corresponding $(K_4^{(3)},2)$-bootstrap percolation process with initial infection $G_0'$, and let us write $W_j\coloneqq\{x_1,\dots,x_j,x_k,x_{k+1},\dots,x_{k+j-1}\}$ for all $j\in [\ell-k]$.

\begin{claim}
We have
\begin{equation*}
    E(G_t')=
    \begin{cases}
    E(G_t)\cup\mathcal{E}&\text{ if\/ } t\in[T],\\\vspace{0.2em}
    \displaystyle\binom{\{x_1,\dots,x_{k+j-1}\}}{3}\cup\mathcal{E}\cup F_j^{\mathrm{odd}}&\text{ if\/ }t=T+2j-1\text{ with }j\in[\ell-k],\\\vspace{0.5em}
    \displaystyle\binom{\{x_1,\dots,x_{k+j}\}}{3}\cup\mathcal{E}\cup F_j^{\mathrm{even}}&\text{ if\/ }t=T+2j\text{ with }j\in[\ell-k],
    \end{cases}
\end{equation*}
where
\[F_j^\mathrm{odd}\coloneqq\{x_{k+j}wx_a:w\in W_j, a\in[k+j-1], x_a\neq w\}\]
and
\[F_j^\mathrm{even}\coloneqq\{x_{k+j+1}wx_a: w\in W_j,a\in\{j+1,k+j\}\},\]
unless\/ $j=\ell-k$, in which case\/ $F_{\ell-k}^\mathrm{even}\coloneqq\varnothing$.
\end{claim}

\begin{claimproof}
We show this statement by induction on $t$.
The case $t\in[T]$ is straightforward.
Indeed, note that $|e_i\cap e_j|< 2$ for all distinct $i,j\in[\ell-k]$, and $|e_i\cap f|<2$ whenever $f\in E(G_{T-1})$ (by our assumption that $G_{T-1}$ does not contain any triple containing both $x_1$ and $x_{k}$).
Recall that, if a copy $H$ of $K_4^{(3)}$ is $2$-completable in a hypergraph $G$, then $G$ contains two edges of $H$, which must share two vertices.
Thus, in the first $T$ steps of the process, the addition of the edges in $\mathcal{E}$ does not result in any infections that did not occur for $G_0$.
Now assume that $t>T$ and the statement above holds for $t-1$.
For notational purposes, set $F_0^\mathrm{even}\coloneqq\varnothing$.
We split the analysis into two cases.

\textbf{Case 1.} 
Consider first the case $t=T+2j-1$ (with $j\in[\ell-k]$). 
By the induction hypothesis,
\[E(G_{t-1}')=\binom{\{x_1,\dots,x_{k+j-1}\}}{3}\cup\mathcal{E}\cup F_{j-1}^{\mathrm{even}}.\]

We first verify that $F_j^{\mathrm{odd}}\subseteq E(G_t')$.
Let $w\in W_j$ and $a\in[k+j-1]$ with $x_a\neq w$, so that $x_{k+j}wx_a\in F_j^{\mathrm{odd}}$.
Then, there exists some $w'\in W_j$ with $x_{k+j}ww'\in E(G_{t-1}')$.
(Indeed, if we pick $w'$ such that $w'\in \{x_j,x_{k+j-1}\}$ and $w'\not =w$, then $x_{k+j}ww'\in F_{j-1}^{\mathrm{even}}\cup\{e_{j}\}$.)
If $x_a=w'$, then trivially $x_{k+j}wx_a\in E(G_t')$.
If $x_a\neq w'$, then we also have $ww'x_a\in E(G_{t-1}')$.
It follows that the copy of $K_4^{(3)}$ with vertex set $\{x_{k+j},w,w',x_a\}$ is $2$-completable in $G_{t-1}'$, and so $x_{k+j}wx_a\in E(G_t')$.

We next show that any edge infected at time $t$ must belong to $F_j^{\mathrm{odd}}$.
Indeed, if $h\in[j+1,\ell-k]$ then $|e_h\cap f|<2$ for all $f\in E(G_{t-1}')\setminus\{e_h\}$, so $e_h$ cannot appear in a copy of $K_4^{(3)}$ completed in this step.
It follows that any added edge must be of the form $e=x_{k+j}x_ax_b$ with $a,b\in[k+j-1]$ distinct.
Furthermore, either $x_a$ or $x_b$ must appear together with $x_{k+j}$ in an edge of $E(G_{t-1}')\setminus\{e_i:i\in[j+1,\ell-k]\}$, so one of $x_a,x_b$ must belong to $W_{j-1}\cup\{x_j,x_{k+j-1}\}=W_j$.
But then $e\in F_j^\mathrm{odd}$, as claimed.

\textbf{Case 2.}
Consider now the case $t=T+2j$ (with $j\in[\ell-k]$).
By induction, we know
\[E(G_{t-1}')=\binom{\{x_1,\dots,x_{k+j-1}\}}{3}\cup\mathcal{E}\cup F_{j}^{\mathrm{odd}}.\]

Observe first that, whenever $a\in[k+j-2]$, we have $x_{k+j}x_{k+j-1}x_a\in F_{j}^{\mathrm{odd}}\subseteq E(G_{t-1}')$.
It follows that, whenever $a,b\in [k+j-2]$ are distinct, the copy of $K_4^{(3)}$ with vertex set $\{x_{k+j},x_{k+j-1},x_a,x_b\}$ is $2$-completable in $G_{t-1}'$.
Hence, $x_{k+j}x_{c}x_{d}\in E(G_t')$ whenever $c,d\in[k+j-1]$ (distinct).
Thus, $\inbinom{\{x_1,\dots,x_{k+j}\}}{3}\subseteq E(G_t')$. 
Furthermore, assume that $j\not =\ell-k$ and $e\in F_j^\mathrm{even}$, so $e=x_{k+j+1}wx_a$ with $w\in W_j$ and $a\in\{j+1,k+j\}$.
Then, $e_{j+1}=x_{k+j+1}x_{k+j}x_{j+1}\in E(G_{t-1}')$ and $x_{k+j}wx_{j+1}\in F_{j}^{\mathrm{odd}}\subseteq E(G_{t-1}')$, so the copy of $K_4^{(3)}$ with vertex set $\{x_{k+j+1},x_{k+j},x_{j+1},w\}$ is $2$-completable in $G_{t-1}'$, which implies $x_{k+j+1}wx_a\in E(G_t')$.
So $F_j^\mathrm{even}\subseteq E(G_t')$.

It remains to show that any edge added in this step must belong to $\inbinom{\{x_1,\dots,x_{k+j}\}}{3}\cup F_j^{\mathrm{even}}$.
Indeed, as in the previous case, we see that any copy of $K_4^{(3)}$ which is $2$-completable in $G_{t-1}'$ must have vertex set $\{x_a,x_b,x_c,x_d\}$ with $a,b,c,d\in[k+j+1]$ (distinct).
So any edge which is infected at time $t$ is of the form $e=x_ax_bx_c$ with $a,b,c\in[k+j+1]$.
If $a,b,c\in[k+j]$, the containment holds trivially, so we may assume that $c=k+j+1$.
In order for $e$ to become infected at time $t$, we must have that $x_{k+j+1}x_ax_d$ or $x_{k+j+1}x_bx_d$ appears in $E(G_{t-1}')$; we may assume that it is the former.
But this edge must be $e_{j+1}=x_{k+j+1}x_{k+j}x_{j+1}$,\COMMENT{This follows from the induction hypothesis: it is straightforward to check that this is the only edge in $G_{t-1}'$ whose largest vertex has label at most $k+j+1$ and which contains a vertex with this label.} and hence $\{a,d\}=\{j+1,k+j\}$.
It also follows that $x_{k+j+1}x_bx_d\not \in E(G_{t-1}')$, and hence $x_ax_bx_d\in E(G_{t-1}')$, i.e., $x_{k+j}x_bx_{j+1}\in E(G_{t-1}')$. 
This implies $x_{k+j}x_bx_{j+1}\in F_{j}^\mathrm{odd}$\COMMENT{It is trivial that this edge does not lie in $\inbinom{\{x_1,\dots,x_{k+j-1}\}}{3}$, and straightforward to check it cannot be in $\mathcal{E}$ because we have $b\neq c$.}
and, therefore, $x_b\in W_j$.
So $c=k+j+1$, $a\in\{j+1,k+j\}$ and $x_b\in W_j$, hence $x_ax_bx_c\in F_j^\mathrm{even}$, as claimed.
\end{claimproof}

By the claim above, $G_{T+2(\ell-k)}'$ is complete, but $G_{T+2(\ell-k)-1}'$ is not unless $\ell=2k-2$ (indeed, if $\ell\neq2k-2$, then $x_{k-2}x_{k-1}x_\ell\notin E(G_{T+2(\ell-k)-1}')$). 
Moreover, if $\ell=2k-2$, then $G_{T+2(\ell-k-1)}'=G_{T+2k-6}'$ does not contain an edge in which both $x_{2k-2}$ and $x_{k-1}$ appear.
The statement of the lemma follows.
\end{proof}

We are ready to deduce the lower bound.

\begin{lemma}\label{lemma_doubleinfectionlowerbound}
For all\/ $n\geq 4$ we have\/ $M_{(K_{4}^{(3)},2)}(n)\geq2n-\lfloor\log_2(n-2)\rfloor-6$.
\end{lemma}

\begin{proof}
Observe first that there is a nice $3$-uniform hypergraph $G_0$ on $4$ vertices $\{x_1,x_2,x_3,x_4\}$, given by $E(G_0)=\{x_1x_2x_3,x_2x_3x_4\}$, for which the running time of the $(K_{4}^{(3)},2)$-process is $T=1$. 
A straightforward induction using \cref{lemma_doubleinfectionconstruction} shows that for all $m\geq 1$ there exists a nice hypergraph on $2^m+2$ vertices for which the running time of the $(K_{4}^{(3)},2)$-process is $2^{m+1}-(m+2)$.
Furthermore, also by \cref{lemma_doubleinfectionconstruction}, whenever $2^m+2\leq n< 2^{m+1}+2$ we have
\[M_{(K_{4}^{(3)},2)}(n)\geq 2^{m+1}-(m+2)+2(n-2^m-2)=2n-m-6=2n-\lfloor\log_2(n-2)\rfloor-6,\]
as claimed.
\end{proof}

We now turn to the proof of the upper bound. 
For any $t\geq 1$, let $m:=m(t)$ denote the unique positive integer  satisfies
\[2^{m+1}-(m+2)\leq t<2^{m+2}-(m+3).\]
The following key lemma essentially shows that the infections must contain a substructure similar to the one in our construction.

\begin{lemma}\label{lemma_doubleinfectionK4}
	Let\/ $G_0$ be a\/ $3$-uniform hypergraph on\/ $n\geq4$ vertices, and consider the\/ $(K_4^{(3)},2)$-process\/ $G_0,G_1,\dots$ with\/ $G_0$ as initial infection.
	Assume that\/ $a\geq 1$ and\/ $e\in E(G_a)\setminus E(G_{a-1})$.
	Then, there exist some\/ $t_e\geq a$,\/ $S_e\subseteq V(G_0)$,\/ $v_e\in S_e$ and\/ $W_e\subseteq S_e\setminus\{v_e\}$ such that, for\/ $m=m(t_e)$,
	\begin{enumerate}[label=$(\mathrm{P}\arabic*)$]
		\item\label{prop2} $e\subseteq S_e$,
		\item\label{prop3} $|S_e|=({t_e+m+6})/{2}$,
		\item\label{prop4} $|W_e|=t_e-(2^{m+1}-(m+2))$,
		\item\label{prop1} $G_{t_e}[S_e]$ is complete, and
		\item\label{prop5} $\inbinom{S_e\setminus\{v_e\}}{3}\cup\{v_ews:w\in W_e,s\in S_e\setminus\{v_e,w\}\}\subseteq E(G_{t_e-1})$.
	\end{enumerate}
\end{lemma}

\begin{proof}
	We prove the statement by induction on $a$.
	If $a=1$, then we know $e$ is in some copy $H$ of $K_4^{(3)}$ in $G_1$.
	We can set $t_e=1$ (so $m=1$), $S_e=V(H)$, $v_e\in V(H)$ such that $V(H)\setminus\{v_e\}\in E(G_0)$, and $W_e=\varnothing$; the properties \ref{prop2}--\ref{prop5} are then satisfied.
	
	Now assume that $a\geq 2$ and the statement holds for smaller values of $a$.
	We know there is some copy $H$ of $K_4^{(3)}$ in $G_a$ such that $e\in E(H)$, $|E(H)\cap E(G_{a-1})|\geq 2$ and $|E(H)\cap E(G_{a-2})|< 2$.
	It follows that there is some $f\in E(H)$ such that $f\in E(G_{a-1})\setminus E(G_{a-2})$, i.e., $f$ is infected at time $a-1$.
	Furthermore, there is another edge $f'\in E(H)\cap E(G_{a-1})$ (with $f'\not =f$).
	Let us write $t\coloneqq t_f$, $S\coloneqq S_f$, $W\coloneqq W_f$ and $v\coloneqq v_f$, and let $m=m(t)$.
	We consider several cases according to how $e$, $f$ and $f'$ overlap with $S$ (and $v$, $W$).
	Note that, if $e\not \subseteq S$, then there is some $p\in V(G_0)$ such that $e\setminus f=e\setminus S=\{p\}$ and $p\in f'$.
	
	\textbf{Case 1:} $e\subseteq S$.
	Since $e\not \in E(G_{a-1})$ and $G_t[S]$ is complete, we have $t\geq a$.
	It follows that $t_e=t$, $S_e=S$, $v_e=v$, $W_e=W$ satisfy properties \ref{prop2}--\ref{prop5}.
	
	\textbf{Case 2:} $e\not \subseteq S$, $f'\in E(G_{t-1})$, and $f'=pss'$ for some $s,s'\in S\setminus W$ (recall $\{p\}=e\setminus S$).
	By \ref{prop5} for $f$, we know that, whenever $w\in W$, we have $wss'\in E(G_{t-1})$.\COMMENT{Note that both this and the next sentence are vacuously true if $W=\varnothing$.}
	This, together with the fact that $f'=pss'\in E(G_{t-1})$, guarantees that 
	\begin{equation}\label{eq:upper1}
	    pws, pws'\in E(G_t). 
	\end{equation}
	Let $u\in W\cup\{s,s'\}$ and $u'\in \{s,s'\}$ with $u'\not =u$, and let $z\in S\setminus\{s,s'\}$ with $z\not =u$.\COMMENT{\ref{prop3} and \ref{prop4} guarantee that these choices always exist.
	Indeed, using the upper bound in the definition of $m$,
	\[|S\setminus W|=\frac{t+m+6}{2}-(t-(2^{m+1}-(m+2)))=2^{m+1}-\frac{t+m}{2}+1>2^{m+1}-\frac{2^{m+2}-(m+3)+m}{2}+1=\frac{5}{2}.\]}
	Then, \eqref{eq:upper1} and our assumption on $f'$ tell us that $puu'\in E(G_t)$, and by \ref{prop1} for $f$ we have that $uu'z\in E(G_t)$.
	This implies $puz\in E(G_{t+1})$.
	That is, for all $u\in W\cup\{s,s'\}$ and all $z\in S\setminus \{u\}$ we have $puz\in E(G_{t+1})$.
	This in turn implies that $G_{t+2}[S\cup\{p\}]$ is complete (as $psz,psz'\in E(G_{t+1})$ implies $pzz'\in E(G_{t+2})$).
	
	If $t=2^{m+2}-(m+4)$, then $|W|=2^{m+1}-2$ and $|S|=2^{m+1}+1$, so we have $|W\cup\{s,s'\}|=|S|-1$ and hence $G_{t+1}[S\cup\{p\}]$ is complete by the observation above.
	Hence, $t_e=t+1$, $S_e=S\cup\{p\}$, $v_e=p$, $W_e=\varnothing$ satisfy properties \ref{prop2}--\ref{prop5} (note that in this case $m(t_e)=m+1$).
	
	On the other hand, if $t\not =2^{m+2}-(m+4)$, then $t\leq 2^{m+2}-(m+6)$ (since $t+m=2|S|-6$ is even by \ref{prop3}). So $m(t+2)=m(t)$.
	It follows that $t_e=t+2$, $S_e=S\cup\{p\}$, $v_e=p$, $W_e=W\cup\{s,s'\}$ satisfy the properties.
	
	\textbf{Case 3:} $e\not \subseteq S$, $f'\in E(G_{t-1})$, and $f'=pws$ for some $w\in W$, $s\in S$, $\{p\}=e\setminus S$.
	Then, by \ref{prop5} for $f$, whenever $z\in S\setminus \{w,s\}$ we have $wsz\in E(G_{t-1})$.
	Since $f'=pws\in E(G_{t-1})$, it follows that $pwz\in E(G_t)$ for all $z\in S\setminus\{w\}$.
	Therefore, whenever $z,z'\in S\setminus\{w\}$ are distinct, we have $pwz,pwz'\in E(G_t)$, and hence $pzz'\in E(G_{t+1})$.
	Thus, $G_{t+1}[S\cup\{p\}]$ is complete. 
	
	If $t=2^{m+2}-(m+4)$, then $t_e=t+1$, $S_e=S\cup\{p\}$, $v_e=p$, $W_e=\varnothing$ satisfy properties \ref{prop2}--\ref{prop5}.
	
	On the other hand, assume that $t<2^{m+2}-(m+4)$ (as in case 2, we then have $m(t+2)=m(t)$). 
	Then, \ref{prop3} and \ref{prop4} imply that $|W|< |S|-3$.
	Let $W'$ be an arbitrary subset of $S$ of size $|W|+2$.
	Then, $t_e=t+2$, $S_e=S\cup\{p\}$, $v_e=p$, $W_e=W'$ satisfy properties \ref{prop2}--\ref{prop5}.
	
	\textbf{Case 4:} $e\not \subseteq S$, $f'\not \in E(G_{t-1})$.
	Then, we must have $t=a-1$ and $f'\in E(G_t)\setminus E(G_{t-1})$. 
	We may assume that $t_{f'}=t$, since otherwise we can swap the roles of $f$ and $f'$ and we are done by the previous cases.
	Let us write $S'$ for $S_{f'}$.
	Note that $|S|=|S'|$, and $S\cap S'\supseteq f\cap f'$ has size at least $2$.
	
	Assume first that $S'\setminus S=\{p\}$ for some $p\in V(G_0)$ (where necessarily $\{p\}=e\setminus f$). 
	Then, $S\setminus S'=\{q\}$ for some $q\in V(G_0)$ (with $\{q\}=e\setminus f'$).
	Observe that, by \ref{prop1}, whenever $s,s'\in S\cap S'$ are distinct, we have $pss'\in E(G_t)$ and $qss'\in E(G_t)$. This implies that $pqs\in E(G_{t+1})$ for every $s\in S\cap S'$.
	Hence, $G_{t+1}[S\cup S']$ is complete, where $|S\cup S'|=|S|+1$.
	If $t=2^{m+2}-(m+4)$, then properties \ref{prop2}--\ref{prop5} are satisfied for $t_e=t+1$, $S_e=S\cup S'$, $v_e=p$ and $W_e=\varnothing$.
	Otherwise, $t\leq 2^{m+2}-(m+6)$ (as $t+m$ is even), so \ref{prop2}--\ref{prop5} are satisfied for $t_e=t+2$, $S_e=S\cup S'$, $v_e=p$, and $W_e$ an arbitrary subset of $S$ of size $|W|+2$.
	
	Now assume that $|S'\setminus S|\geq 2$.
	Observe that, whenever $x,y\in S\cap S'$ (distinct), $s\in S\setminus S'$ and $s'\in S'\setminus S$, by \ref{prop1} we know $xys, xys'\in E(G_t)$, and therefore $xss'\in E(G_{t+1})$.
	If $s\in S\setminus S'$ and $s',\bar{s}'\in S'\setminus S$ are distinct, then $xs\bar{s}'\in E(G_{t+1})$ (by \ref{prop1}).
	Together with the fact that $xss'\in E(G_{t+1})$, we conclude that $ss'\bar{s}'\in E(G_{t+2})$.
	Similarly, if $s'\in S'\setminus S$ and $s,\bar{s}\in S\setminus S'$ (distinct), then $s\bar{s}s'\in E(G_{t+2})$.
	Hence, $G_{t+2}[S\cup S']$ is complete, where $|S\cup S'|\geq|S|+2$.
	Now pick any two vertices $p,p'\in S'\setminus S$ with $e\subseteq S\cup\{p,p'\}$.
	If $t=2^{m+2}-(m+6)$, then let $t_e=t+3$, $S_e=S\cup\{p,p'\}$, $v_e=p$ and $W_e=\varnothing$.
	If $t=2^{m+2}-(m+4)$, then let $t_e=t+3$, $S_e=S\cup\{p,p'\}$, $v_e=p$ and $W_e$ an arbitrary subset of $S\cap S'$ of size $2$.
	Finally, assume $t<2^{m+2}-(m+6)$.
	Since $t+m$ is even, we have $t\leq 2^{m+2}-(m+8)$.
	Then, let $t_e=t+4$, $S_e=S\cup\{p,p'\}$, $v_e=p$ and $W_e$ an arbitrary subset of $S$ of size $|W|+4$.
	These choices satisfy properties \ref{prop2}--\ref{prop5}.
	This finishes the proof of the lemma.	
\end{proof}

\begin{proof}[Proof of \cref{prop:linear}]
The lower bound follows from \cref{lemma_doubleinfectionlowerbound}.
For the upper bound, given an arbitrary hypergraph $G_0$ on a vertex set $V$ of size $n\geq 4$, consider the $(K_4^{(3)},2)$-process $G_0,G_1,\dots$ with $G_0$ as initial infection. 
Let $T\coloneqq M_{(K_4^{(3)},2)}(G_0)$ be the running time of the process, and let $e\in E(G_T)\setminus E(G_{T-1})$ be arbitrary.
By \cref{lemma_doubleinfectionK4}, there is some $t\geq T$ such that $G_t$ contains a clique of size $\frac{t+m(t)+6}{2}\geq \frac{T+m(T)+6}{2}$.
Hence,
\[\frac{T+m(T)+6}{2}\leq n.\]
It follows that
\[T\leq 2n-\lfloor\log_2(n-2)\rfloor-6,\]
as we wanted to prove.
Indeed, if $\lfloor\log_2(n-2)\rfloor=\alpha$, then $2^\alpha+2\leq n\leq 2^{\alpha+1}+1$ and hence $2^{\alpha+1}-(\alpha+2)\leq 2n-\alpha-6< 2^{\alpha+2}-(\alpha+3)$, giving $m(2n-\alpha-6)=\alpha$ and $\frac{(2n-\alpha-6)+m(2n-\alpha-6)+6}{2}=n$.
\end{proof}

\subsection{Double infections for $K_5^{(3)}$}
We now turn our attention to \cref{thm:cubic}.
In order to prove it, observe that, for an $r$-uniform hypergraph $H$, the trivial upper bound $M_{(H,k)}(n)\leq\binom{n}{r}$ still holds, so it suffices to provide a lower bound.
We will proceed by constructing an initial infection for which the $(K_{5}^{(3)},2)$-bootstrap percolation process runs for a cubic number of steps.
At most two edges will become infected in each step of the infection process, which will make it easier to analyse the number of steps.
Our construction is intuitively similar to the one we constructed for the proof of \cref{theorem_bootstrapgeneral}, albeit a bit more convoluted.
Let us begin with an intuitive description.

Our vertex set will again be split into three layers, with vertices $t_i$ playing the role of `top' vertices, vertices $b_j$ playing the role of `bottom' vertices, and vertices $m_\ell$ conforming the `middle' layer.
We will also have a number of `dummy' vertices.
For fixed top and bottom vertices, the vertices in the middle layer will allow us to infect two edges at a time, while traversing this layer, for a linear total number of steps.
For each fixed bottom vertex, there will be some extra edges at the end of the middle layer which will allow us to swap the bottom vertex for the next one and continue the process.
Finally, the dummy vertices will allow us to swap the top vertex and start the process anew.

To be more precise, let us describe the first few stages of the infection process.
For each $0\leq\ell\leq n$, our initial infection will contain all edges of the copy of $K_5^{(3)}$ with vertex set $t_1,b_1,m_\ell,m_{\ell+1},m_{\ell+2}$ except for $t_1m_{\ell}b_1$, $t_1m_{\ell+1}b_1$, and $t_1m_{\ell+2}b_1$.
It will also contain $t_1m_0b_1$.
This edge will trigger the infection of $t_1m_1b_1$ and $t_1m_2b_1$ in the first step of the process, then of $t_1m_3b_1$ and $t_1m_4b_1$, and the infection will keep propagating towards higher values of $\ell$, until finally, after $n$ steps, the edges $t_1m_{2n-1}b_1$ and $t_1m_{2n}b_1$ become infected.

At this point, we will swap out $b_1$ to $b_{-1}$.
This can be achieved in two steps of the infection process.
Our initial infection will already contain all edges of the copy of $K_5^{(3)}$ with vertex set $t_1,b_1,m_{2n},m_{2n+1},m_{2n+2}$ except for $t_1m_{2n}m_{2n+1}$, $t_1m_{2n+1}m_{2n+2}$, and the edge $t_1m_{2n}b_1$ which was just added in the previous step; $t_1m_{2n}m_{2n+1}$ and $t_1m_{2n+1}m_{2n+2}$ will therefore become infected in the next step.
The initial infection also contains all the edges of the copy of $K_5^{(3)}$ with vertex set $t_1,b_{-1},m_{2n},m_{2n+1},m_{2n+2}$ except for $t_1m_{2n}b_{-1}$, $t_1m_{2n+1}b_{-1}$, and the two that were just added.
These two edges now become infected as well, and start a new infection process where now the indices decrease through the middle layer.

Finally, suppose we have reached a point where the copy of $K_5^{(3)}$ defined on the vertices $t_1$, $b_n$, $m_{4n-4}$, $m_{4n-3}$, $m_{4n-2}$ has been completely infected, with the edges infected in the last step of the process being $t_1m_{4n-3}b_n$ and $t_1m_{4n-2}b_n$.
We now want to swap out the top vertex to $t_2$, using for this purpose four dummy vertices.
Similarly as in the proof of \cref{theorem_bootstrapgeneral}, these will simply create a short chain of infections that allows us to restart the process.

We now give a formal proof.

\begin{proof}[Proof of \cref{thm:cubic}]
Consider an initial infection hypergraph $G_0$ whose vertex set consists of $12n-5$ vertices, labelled as $t_1,\ldots,t_n$, $b_1,\ldots,b_n$, $b_{-1},\ldots,b_{-(n-1)}$, $m_{-2(n-1)},\ldots,m_{4n}$, and $d_{i,1},d_{i,2},d_{i,3}$ for $i\in[n-1]$.
For notational purposes, for each $i\in[n]$ let $d_{i,-2}\coloneqq t_i$, $d_{i,-1}\coloneqq b_n$, $d_{i,0}\coloneqq m_{4n-2}$, $d_{i,4}\coloneqq m_0$, $d_{i,5}\coloneqq b_1$, and $d_{i,6}\coloneqq t_{i+1}$.
The edges of $G_0$ appear in the following list:
\begin{enumerate}[label=(\alph*)]
    \item\label{initial1} $t_1m_0b_1$;
    
    \item\label{initial2} $m_\ell m_{\ell+1}m_{\ell+2}$, for all $-2(n-1)\leq\ell\leq4n-4$;
    
    \item\label{initial3} $t_im_\ell m_{\ell+2}$, for all $i\in[n]$ and $-2(n-1)\leq\ell\leq4n-4$;\COMMENT{We do not actually need all of these edges, but not including the ones that we do not explicitly use would make some steps of the process include three edges at once, which we have said we want to avoid.}
    \item\label{initial4} $b_jm_\ell m_{\ell+2}$, for all $j\in[n]$ and $-2(j-1)\leq\ell\leq2(n+j-1)$;
    \item\label{initial5} $b_{-j}m_\ell m_{\ell+2}$, for all $j\in[n-1]$ and $-2j\leq\ell\leq2(n+j-1)$;
    
    \item\label{initial6} $t_im_\ell m_{\ell+1}$, for all $i\in[n]$ and $0\leq\ell\leq 2n-1$;
    \item\label{initial7} $b_jm_\ell m_{\ell+1}$, for all $j\in[n]$ and $-2(j-1)\leq\ell\leq2(n+j)-1$;
    \item\label{initial8} $b_{-j}m_\ell m_{\ell+1}$, for all $j\in[n-1]$ and $-2j\leq\ell\leq2(n+j)-1$;
    
    \item\label{initial9} $t_im_{2(n+j)-1}b_j$, $t_im_{2(n+j)}b_j$ and $t_im_{2(n+j)}b_{-j}$, for all $i\in[n]$ and $j\in[n-1]$;
    \item\label{initial10} $t_im_{-2j+1}b_{-j}$, $t_im_{-2j}b_{-j}$ and $t_im_{-2j}b_{j+1}$, for all $i\in[n]$ and $j\in[n-1]$;
    
    \item\label{initial11} $d_{i,j}d_{i,j+1}d_{i,j+3}$, $d_{i,j}d_{i,j+1}d_{i,j+4}$, $d_{i,j}d_{i,j+2}d_{i,j+3}$, $d_{i,j}d_{i,j+2}d_{i,j+4}$, $d_{i,j}d_{i,j+3}d_{i,j+4}$, $d_{i,j+1}d_{i,j+2}d_{i,j+3}$,\linebreak{} and $d_{i,j+1}d_{i,j+3}d_{i,j+4}$, for all $i\in[n-1]$ and $j\in\{-2,0,2\}$.
\end{enumerate}
To compare this with the construction hinted at before the proof, consider the following.
The edge in \ref{initial1} is an edge $e_0$ which starts the whole infection process.
The edges in \ref{initial2}--\ref{initial8} are there to ensure the correct propagation of the infection through the middle layer, where the edges in \ref{initial4} and \ref{initial7} will be used to propagate the infection towards larger values of $\ell$, using some bottom vertex of the form $b_j$, and those in \ref{initial5} and \ref{initial8} will be used to propagate the infection towards smaller values of $\ell$, using some bottom vertex of the form $b_{-j}$.
The edges in \ref{initial9} and \ref{initial10} are needed for swapping the bottom vertices.
Finally, the edges which appear in \ref{initial11} are used to swap the top vertices.

For each pair $(i,j)$ with $i,j\in[n]$, let $A_{i,j}$ be the sequence of edges
\begin{equation}\label{eq:2perc1}
    A_{i,j}\coloneqq(t_im_{-2(j-1)+\ell}b_j)_{\ell=1}^{2n+4(j-1)}.
\end{equation}
Similarly, for each pair $(i,j)$ with $i\in[n]$ and $j\in[n-1]$, we define
\begin{equation}\label{eq:2perc2}
    A_{i,-j}\coloneqq(t_im_{2(n+j)-\ell}b_{-j})_{\ell=1}^{2n+4(j-1)+2}.
\end{equation}
Note that each of these has an even number of elements.
Using $\bigtimes$ to denote concatenation, for each phase $i\in[n]$ we define the sequence
\begin{equation}\label{eq:2perc3}
    A_i\coloneqq A_{i,1}\bigtimes_{j=1}^{n-1}(t_im_{2(n+j-1)}m_{2(n+j)-1},t_im_{2(n+j)-1}m_{2(n+j)})A_{i,-j}(t_im_{-2(j-1)}m_{-2j+1},t_im_{-2j+1}m_{-2j})A_{i,j+1}.
\end{equation}
Finally, we set
\begin{align}\label{eq:2perc4}
    A\coloneqq\ & A_1\bigtimes_{i=1}^{n-1}(d_{i,-1}d_{i,0}d_{i,2},d_{i,0}d_{i,1}d_{i,2},d_{i,1}d_{i,2}d_{i,4},d_{i,2}d_{i,3}d_{i,4},d_{i,3}d_{i,4}d_{i,6},d_{i,4}d_{i,5}d_{i,6})A_{i+1}\nonumber\\
    =\ &A_1\bigtimes_{i=1}^{n-1}(b_nm_{4n-2}d_{i,2},m_{4n-2}d_{i,1}d_{i,2},d_{i,1}d_{i,2}m_0,d_{i,2}d_{i,3}m_0,d_{i,3}m_0t_{i+1},m_0b_1t_{i+1})A_{i+1}.
\end{align}
We will sometimes abuse notation and treat each of the above sequences as sets.
Observe that $A$ has an even number of elements and that none appear repeatedly.
We may label these elements as $(e_1,e_1',e_2,e_2',\ldots,e_T,e_T')$, for some $T>0$.
Note that $T=4n^3+O(n^2)$ by construction.\COMMENT{Each $A_{i,j}$ corresponds to exactly $n+2(j-1)$ steps of the process, while each $A_{i,-j}$ corresponds to exactly $n+2j-1$.
For fixed $i$, each one of these, except the last one, is extended by one step when concatenating them.
This is repeated for each of the possible values of $i$, and these are concatenated by using $3$ more steps.
Therefore, the total number of steps is
\[n\left(2n-2+\sum_{j=0}^{2n-2}(n+j)\right)+3(n-1)=n\left(2n-2+(2n-1)n+(2n-1)(n-1)\right)+3(n-1)=4n^3+O(n^2).\]}
Note, moreover, that by construction we are guaranteed that $|\{e_t,e_t'\}\cap A_{i,j}|\in\{0,2\}$ for all $i$ and $j$.
Additionally, any two consecutive triples in $A$ share exactly two vertices, thus, it is easy to check that any three consecutive triples span five vertices.

Let $e_0'\coloneqq t_1m_0b_1$.
For each $t\in[T-1]$, let $H_t$ denote the copy of $K_5^{(3)}$ with vertex set $e_{t-1}'\cup e_t\cup e_t'$, and let $H_t'$ denote the copy of $K_5^{(3)}$ with vertex set $e_{t-1}\cup e_{t-1}'\cup e_t$ (if $t>1$).
For each $t\in[T]$, let $G_t$ be the hypergraph with edge-set $E(G_{t-1})\cup\{e_t,e_t'\}$.
We will show that these graphs indeed coincide with those obtained by the $K_5^{(3)}$-bootstrap percolation process with initial infection $G_0$.

\begin{claim}\label{claim:2edges}
Let\/ $H$ be a copy of\/ $K_5^{(3)}$ on the vertex set of\/ $G_0$.
Assume that, for some\/ $t\in[0,T-1]$, we have that\/ $H\nsubseteq G_t$ but\/ $H$ is\/ $2$-completable in\/ $G_t$.
Then, the following hold.
\begin{itemize}
    \item If\/ $H$ is\/ $1$-completable in\/ $G_t$, suppose that adding\/ $e$ to\/ $G_t$ completes\/ $H$.
    Then,\/ $t\geq 1$,\/ $e=e_{t+1}$ and\/ $H=H_{t+1}'$.
    \item If\/ $H$ is not\/ $1$-completable in\/ $G_t$, suppose that adding\/ $e$ and\/ $e'$ to\/ $G_t$ completes\/ $H$.
    Then,\/ $\{e,e'\}=\{e_{t+1},e_{t+1}'\}$ and\/ $H=H_{t+1}$.
\end{itemize}
\end{claim}

\begin{claimproof}
Consider any copy $H$ of $K_5^{(3)}$ on $V(G_0)$.
If $H$ contains two vertices of the form $t_i$ and $t_{i'}$ with $1\leq i< i'\leq n$, since $G_T$ does not contain any edge with two `top' vertices (see \ref{initial1}--\ref{initial11} as well as \eqref{eq:2perc1}--\eqref{eq:2perc4}), $H$ must be missing at least three edges in $G_T$.\COMMENT{One for each other vertex in $H$.} 
As $G_0\subseteq G_1\subseteq\ldots\subseteq G_T$, it follows that $H$ is not $2$-completable in $G_t$ for any $t\in[0,T-1]$.
The same argument holds if $H$ contains two vertices of the form $b_j$ and $b_{j'}$.
Hence, we may assume that $H$ contains at most one vertex $t_i$ and one vertex $b_j$.
Similarly, if $H$ contains two vertices $m_\ell$ and $m_{\ell'}$ with $|\ell-\ell'|\geq3$, then $G_T$ does not contain any edge containing both $m_\ell$ and $m_{\ell'}$, so $H$ is not $2$-completable in $G_T$.
Therefore, $H$ contains at most three vertices of the form $m_\ell$, and their indices must be within distance two of each other.

Assume first that $H$ contains some vertex of the form $d_{i,c}$ with $i\in[n-1]$ and $c\in[3]$.
All triples in $G_T$ containing one such vertex are of the form $d_{i,r}d_{i,p}d_{i,q}$ with $-2\leq r<p<q\leq6$ and $q-r\leq4$ (see \ref{initial11} and \eqref{eq:2perc4}).
It follows easily that, if $V(H)$ does not consist of five consecutive vertices $d_{i,p},d_{i,p+1},\ldots,d_{i,p+4}$ with $-2\leq p\leq2$, then $H$ cannot be $2$-completable in $G_T$.\COMMENT{Say $H$ contains the vertex $d_{i,c}$ and some other vertex $v$ which does not satisfy what is said above. Then, $G_0$ does not contain any edge containing both $d_{i,c}$ and $v$, so at least three edges of $H$ are missing.}
Moreover, if we assume $V(H)=\{d_{i,p+h}:0\leq h\leq4\}$ for some $i\in[n-1]$ and $p\in\{-1,1\}$, then we also know that the triples $d_{i,p}d_{i,p+1}d_{i,p+4}$, $d_{i,p}d_{i,p+2}d_{i,p+4}$ and $d_{i,p}d_{i,p+3}d_{i,p+4}$ do not appear in $G_T$, so again $H$ cannot be $2$-completable.
So we must have $V(H)=\{d_{i,p+h}:0\leq h\leq4\}$ with $p\in\{-2,0,2\}$.
But then, by \ref{initial11} and \eqref{eq:2perc4}, the only three edges missing from $H$ in $G_0$ are $e_{N-1}'$, $e_N$ and $e_N'$, for some $N\in[T]$.
Let $t\in[0,T-1]$ be such that $H$ is $2$-completable in $G_t$ but $E(H)\not \subseteq E(G_t)$.
It is easy to see that we must have $t=N-1$; furthermore, $H=H_{t+1}$, $H$ is not $1$-completable in $G_t$, and $E(G_{t+1})\setminus E(H)=\{e_N,e_N'\}$, as desired.

Assume next that $H$ does not contain any vertex of the form $d_{i,c}$ with $i\in[n-1]$ and $c\in[3]$, so it must contain one top vertex $t_i$, one bottom vertex $b_j$, and three consecutive middle vertices $m_\ell$, $m_{\ell+1}$ and $m_{\ell+2}$.
Assume $j>0$ (the other case can be argued analogously).\COMMENT{In fact, the other case is simpler, as in this case we have ``boundary issues''.}
If $\ell\geq2(n+j)-1$, then $G_T$ is missing the edges $t_im_{\ell+2}b_j$, $b_jm_{\ell+1}m_{\ell+2}$ and $b_jm_{\ell}m_{\ell+2}$ (see \ref{initial4}, \ref{initial7}, \ref{initial9}, \eqref{eq:2perc1} and \eqref{eq:2perc3}), so $H$ cannot be $2$-completable at any stage of the process.
Similarly, if $\ell<-2(j-1)$, then $G_T$ is missing the edges $t_im_\ell b_j$, $b_jm_{\ell}m_{\ell+1}$ and $b_jm_{\ell}m_{\ell+2}$ (see \ref{initial4}, \ref{initial7}, \ref{initial10}, \eqref{eq:2perc1} and \eqref{eq:2perc3}), hence $H$ is not $2$-completable.
Thus, we must have $-2(j-1)\leq\ell\leq2(n+j-1)$.
However, for the case when $j=n$ and $\ell\in\{4n-3,4n-2\}$, it follows from \ref{initial2}, \ref{initial3}, \ref{initial6} and \eqref{eq:2perc3} that $G_T$ is missing the triples $m_{\ell}m_{\ell+1}m_{\ell+2}$, $t_im_{\ell}m_{\ell+2}$ and $t_im_{\ell+1}m_{\ell+2}$, hence $H$ cannot be $2$-completable.
So we must have $-2(j-1)\leq\ell\leq2(n+j-1)$ when $j\in[n-1]$ and $-2(j-1)\leq\ell\leq2(n+j-2)$ when $j=n$.
Let $t\in[0,T-1]$ be such that $H$ is $2$-completable in $G_t$ but $E(H_t)\not \subseteq E(G_t)$.
We now split the analysis into further cases.

Assume first that $j<n$ and $\ell=2(n+j-1)$.
It follows from \ref{initial2}--\ref{initial10} that the only triples of $H$ missing from $G_0$ are $t_im_{\ell}b_j$, $t_im_\ell m_{\ell+1}$ and $t_im_{\ell+1}m_{\ell+2}$.
These three are added throughout the sequence of edges defined above, as follows from \eqref{eq:2perc1} and \eqref{eq:2perc3}, as $e_{N-1}'$, $e_N$ and $e_N'$, respectively, for some $N\in[T]$.
Then, in order for $H$ to be $2$-completable in $G_t$, we must have $e_{N-1}'\in E(G_t)$, and hence $t=N$, $H$=$H_{t+1}$, $H$ is not $1$-completable, and $E(G_{t+1})\setminus E(H)=\{e_N,e_N'\}$, as desired.

Consider next the case that $j<n$ and $\ell=2(n+j-1)-1$.
The triples of $H$ missing from $G_0$ are $t_im_{\ell}b_j$, $t_im_{\ell+1}b_j$ and $t_im_{\ell+1}m_{\ell+2}$ (see \ref{initial2}--\ref{initial10}), which are $e_{N-1}$, $e_{N-1}'$ and $e_N$, respectively, for some $N\in[T]$ (see \eqref{eq:2perc1} and \eqref{eq:2perc3}).
Thus, in order for $H$ to be $2$-completable in $G_t$, this graph must contain at least one of the missing triples; however, since $e_{N-1}$ and $e_{N-1}'$ are added simultaneously in the sequence of graphs, we must have $e_{N-1},e_{N-1}'\in E(G_t)$, and so $H$ is $1$-completable.
Then, the only edge that can complete $H$ is $e=e_N$, and so it follows that $N=t+1$ and $H=H_{t+1}'$.

Assume now that $\ell=-2(j-1)$.
Here we have two further subcases.
If $j=1$, then the only triples of $H$ missing in $G_0$ are precisely $e_1$ and $e_1'$ (see \ref{initial1}--\ref{initial10} as well as \eqref{eq:2perc1}).
Therefore, we have $\{e,e'\}=\{e_1,e_1'\}$ and $H=H_{1}$.
So suppose that $j\geq2$.
Then, the triples of $H$ missing in $G_0$ are $t_im_\ell m_{\ell+1}$, $t_im_{\ell+1}m_{\ell+2}$, $t_im_{\ell+1}b_j$ and $t_im_{\ell+2}b_j$ (see \ref{initial2}--\ref{initial10}).
But then it follows from \eqref{eq:2perc1} and \eqref{eq:2perc3} that these triples take the form $e_{N-1}$, $e_{N-1}'$, $e_N$, $e_N'$, for some $N\in[T]$.
In order for $H$ to be $2$-completable in $G_t$, we must have $e_{N-1},e_{N-1}'\in E(G_t)$.
Then, it follows that $t=N-1$, $H=H_{t+1}$, $H$ is not $1$-completable, and  $E(G_{t+1})\setminus E(H)=\{e_N,e_N'\}$.

Suppose now that $\ell=-2(j-1)+1$.
Again, we must consider two subcases.
If $j=1$, the edges of $H$ missing in $G_0$ are $t_im_{\ell}b_j$, $t_im_{\ell+1}b_j$ and $t_im_{\ell+2}b_j$, which correspond to $e_1$, $e_1'$ and $e_2$ (see \ref{initial2}--\ref{initial10} as well as \eqref{eq:2perc1}).
Thus, in order for $H$ to be $2$-completable in $G_t$ we must have $e_1,e_1'\in E(G_t)$, so $t=1$ and $H$ is $1$-completable in $G_1$.
It then follows that $e=e_2$, and $H=H_2'$.
So suppose $j\geq2$.
Then, the triples of $H$ missing in $G_0$ are $t_im_\ell m_{\ell+1}$, $t_im_{\ell}b_j$, $t_im_{\ell+1}b_j$ and $t_im_{\ell+2}b_j$ (see \ref{initial2}--\ref{initial10}).
By \eqref{eq:2perc1} and \eqref{eq:2perc3}, these triples take the form $e_{N-2}'$, $e_{N-1}$, $e_{N-1}'$, $e_N$, for some $N\in[2,T]$.
In order for $H$ to be $2$-completable in $G_t$, at least two of these edges must be added.
But $e_{N-1}$ and $e_{N-1}'$ are added simultaneously, so we conclude that $e_{N-2}',e_{N-1},e_{N-1}'\in E(G_t)$ and $H$ is $1$-completable in $G_t$.
Therefore, $E(G_{t+1})\setminus E(H)=\{e_N\}$, $N=t+1$ and $H=H_{t+1}'$.

Suppose finally that $-2(j-2)\leq\ell\leq2(n+j-2)$.
Then, the only edges of $H$ missing in $G_0$ are $t_im_{\ell}b_j$, $t_im_{\ell+1}b_j$ and $t_im_{\ell+2}b_j$ (see \ref{initial2}--\ref{initial10}).
If $\ell$ is even, it follows from \eqref{eq:2perc1} that these edges take the form $e_{N-1}'$, $e_N$ and $e_N'$, respectively, for some $N\in[T]$; on the contrary, if $\ell$ is odd, then they take the form $e_{N-1}$, $e_{N-1}'$ and $e_N$.
In the former case, in order for $H$ to be $2$-completable in $G_t$, we must have $e_{N-1}'\in E(G_t)$, and it follows that $E(G_{t+1})\setminus E(H)=\{e_N,e_N'\}$, $N=t+1$ and $H=H_{t+1}$.
In the latter case, we must have $e_{N-1},e_{N-1}'\in E(G_t)$, so $H$ is $1$-completable, and it follows that $E(G_{t+1})\setminus E(H)=\{e_N\}$, $N=t+1$ and $H=H_{t+1}'$.
\end{claimproof}

By applying \cref{claim:2edges} iteratively, we conclude that the $(K_5^{(3)},2)$-bootstrap percolation process with initial infection $G_0$ indeed generates the sequence of graphs $G_0,G_1,\ldots,G_T,\ldots$, so its running time is at least $T=4n^3+O(n^2)$.
By taking into account the number of vertices of the graphs we are considering, we conclude that $M_{(K_{5}^{(3)},2)}(n)\geq4n^3/12^3+O(n^2)$.
\end{proof}

\section{Concluding remarks}

Graph and hypergraph bootstrap percolation have seen a lot of research in recent years, with many intriguing questions remaining open and many possible avenues for further research.
We have focused particularly on understanding the maximum running time of these processes.
Our first main result, \cref{theorem_bootstrapgeneral}, building on the previous work of \citet{NR22}, has allowed us to conclude that the maximum running time of $K_{k}^{(r)}$-bootstrap percolation is of order $\Theta(n^r)$ for any $k>r\geq3$.
A first very natural problem is to determine the leading constant in this asymptotic behaviour.

\begin{problem}
For each\/ $k>r\geq2$, determine the limit of\/ $M_k^r(n)/n^r$.
\end{problem}

In particular, all results in this hypergraph context have relied on the trivial upper bound that $M_H^r(n)\leq\inbinom{n}{r}$; obtaining better upper bounds should be the first step towards this problem.
We also note that the lower bound arising from our construction (see \cref{rem:bound1}) is not tight.

Another very natural direction is to study the asymptotic growth of $M_H(n)$ when $H$ is an $r$-uniform hypergraph which is not complete.
We have made the first progress in this direction by addressing two particular cases, see Theorems~\ref{thmp:1}' and~\ref{thmp:2}'.
A more general study of this problem for different instances of $H$ is crucial towards a unified understanding of hypergraph bootstrap percolation.

More generally, the notion of more `powerful' infections that we proposed when considering $(H,m)$-bootstrap percolation leads to many new open problems.
Here we have only addressed two particular instances to showcase that this notion leads to interesting results.
In the case of $(K_5^{(3)},2)$-bootstrap percolation, \cref{thm:cubic} shows that the maximum running time is cubic, that is, as large as it could possibly be (up to constant factors).
In the case of $(K_4^{(3)},2)$-bootstrap percolation, however, the maximum running time is only linear, and in \cref{prop:linear} we have determined the exact value of this maximum running time for all values of $n$.
Remarkably, this is the only nontrivial exact result in the area other than those for $K_3$- and $K_4$-bootstrap percolation~\cite{BPRS17}.
It would certainly be desirable to understand the behaviour of the maximum running time of $(H,m)$-bootstrap percolation more generally.
To begin, we propose the following problem.

\begin{problem}
Given\/ $k>r\geq2$ and\/ $m\in[\inbinom{k}{r}]$, determine the asymptotic behaviour of the maximum running time of the\/ $(K_k^{(r)},m)$-bootstrap percolation process.
\end{problem}

It would also be interesting to consider this more general $(H,m)$-bootstrap percolation process in other contexts where graph bootstrap percolation has been studied.
In particular, one may consider the extremal problem, i.e., what is the minimum number of edges an initial $r$-uniform infection $G_0$ on $n$ vertices can have if we know the $(H,m)$-percolation process $G_0,\ldots,G_T$ satisfies $G_T=K_n^{(r)}$?

\bibliographystyle{mystyle} 
\bibliography{bootstrap}


\end{document}